\documentclass[a4paper,11pt]{amsart}
\usepackage{amsfonts,amsmath,amssymb,amsthm,mathtools}
\usepackage{geometry} 
\usepackage{fixmath} 
\usepackage{enumitem}

\geometry{margin=3cm} 
\parskip=8pt plus 4pt minus 3pt
\parindent=0pt 

\theoremstyle{plain}
\newtheorem{theorem}{Theorem}[section]

\newtheorem{proposition}[theorem]{Proposition}
\theoremstyle{remark}
\newtheorem{remark}[theorem]{Remark}

\theoremstyle{definition}

\newtheorem{assumptionletter}{Assumptions}

\newcommand{\N}{\mathbb{N}}
\newcommand{\Z}{\mathbb{Z}}
\newcommand{\R}{\mathbb{R}}

\DeclareMathOperator{\Var}{Var}

\begin{document}

\title[On the speed of ERWRE]{On the speed of a one-dimensional random walk in a random environment perturbed by cookies of strength one}

\author{Elisabeth Bauernschubert}

\date{January 16, 2015}
\keywords{Excited random walk in a random environment; Cookies of strength 1; Speed; Law of large numbers; Branching process in a random environment with migration.}

\begin{abstract}
  We consider a random walk in an i.i.d.\ random environment on $\Z$ that is perturbed by cookies of strength 1. The number of cookies per site is assumed to be i.i.d.\ Results on the speed of the random walk are obtained.\\
  Our main tool is the correspondence in certain cases between the random walk and a branching process in a random environment with migration.
\end{abstract}

\maketitle

\section{Introduction}

In \cite{Bauernschubert12} and \cite{Bauernschubert13} 
the author studied a left-transient (respectively recurrent) one-dimensional random walk in a random environment that is
perturbed by cookies of maximal strength and established criteria for transience and recurrence. In the current article, we study the speed of this random walk. 

We recall the model from \cite{Bauernschubert12, Bauernschubert13} and explain it in a few words. Choose
a sequence $(p_x)_{x\in\Z}$, with $p_x\in (0,1)$ for all $x\in\Z$, at random and put on every integer $x\in\Z$ a random number $M_x$ of cookies ($M_x\in\N_0$).  
Now, start a nearest-neighbor random walk at $0$: If the walker encounters a cookie on his current position $x$, he consumes it and is excited to jump to $x+1$ a.s. If there is no cookie, he goes to $x+1$ with probability $p_x$ and to $x-1$ with probability $1-p_x$. For illustrations of the model see \cite[Fig.~1]{Bauernschubert12} or \cite[Figure~1]{Bauernschubert13}.

For a precise definition, denote by $\Omega:=([0,1]^{\N})^{\Z}$ the space of so-called \emph{environments}.
Let $(\Omega', \mathcal{F}')$ be a suitable measurable space with probability measures $P_{x,\omega}$ for $\omega\in\Omega$ and $x\in\Z$ and $(S_n)_{n\geq0}$ a process on $\Omega'$, such that for all $\omega=((\omega(x,i))_{i\geq 1})_{x\in\Z}$ and all $z\in\Z$
\begin{align*}
P_{z,\omega}[S_0=z]&=1,\\
P_{z,\omega}[S_{n+1}=S_n+1|(S_m)_{1\leq m\leq n}]&= \omega(S_n,\#\{m\leq n:\: S_m=S_n\}),\\
P_{z,\omega}[S_{n+1}=S_n-1|(S_m)_{1\leq m\leq n}]&= 1-\omega(S_n,\#\{m\leq n:\: S_m=S_n\})
\end{align*}
is satisfied.
The so-called \emph{excited random walk} (ERW for short) $(S_n)_{n\geq0}$ is a nearest-neighbor random walk under $P_{z,\omega}$ that starts in $z$ and whose transition probability upon the $i$th visit to site $y\in\Z$ is given by $\omega(y,i)$.
In the usual notion, $\omega(y,i)$ is also said to be the strength of the $i$th cookie on site $y$.

The elements $\omega\in\Omega$ itself are chosen at random according to some probability measure $\mathbb{P}$ on $\Omega$. Averaging the so-called \emph{quenched} measure $P_{x,\omega}$ over the environments $\omega$ yields the \emph{annealed} or \emph{averaged} measure $P_x[\cdot]:=\mathbb{E}[P_{x,\omega}[\cdot]]$ on the product space $\Omega\times\Omega'$. By $\mathbb{E}$, $E_{x,\omega}$ and $E_x$, we denote the expectation operators respectively.

The discussion of excited random walks started with \cite{Benjamini} where a simple symmetric random walk (in $\Z^d$, $d\geq1$) is disturbed by one cookie at each site. The model, which is also known as cookie random walk, has been generalized in various ways, e.g.\ in the one-dimensional case among others by Zerner \cite{Zerner05},  Basdevant and Singh \cite{Basdevant08b} and Kosygina and Zerner \cite{Zerner08}.
For a recent survey on ERWs see \cite{KosyginaZerner12}.

In our setting, we consider cookies of strength 1. For each $x\in\Z$, the number of cookies of maximal strength at site $x$ is defined by
\begin{align*}
&M_x:=\sup\{i\geq 1:  \;\omega(x,j)=1\quad\forall 1\leq j\leq i\}
\end{align*}
with the convention $\sup\emptyset =0$.

Throughout the paper, all or parts of the following assumptions on $\mathbb{P}$ will be needed, compare also to \cite{Bauernschubert12, Bauernschubert13}:

\begin{assumptionletter}
         \label{as_A_paper3}
         \renewcommand{\theenumi}{A.\arabic{enumi}}
         \renewcommand{\labelenumi}{\theenumi}
         There is $\mathbb{P}$-a.s.\ a sequence $(p_x)_{x\in\Z}\in (0,1)^{\Z}$ such that the following holds.
         \begin{enumerate}
                 \item \label{as_A1_paper3} $\omega(x,i)=p_x$ for all $x\in\Z$ and for all $i> M_x$.
                 \item \label{as_A2_paper3} $(p_x,M_x)_{x\in\Z}$ is i.i.d.\ and $\{p_x, M_x; x\in\Z\}$ is independent under $\mathbb{P}$.
		 \item \label{as_A3_paper3} $\mathbb{E}[|\log \rho_0|^2]<\infty$ and $\mathbb{E}[\rho_0^2]<\infty$ where $\rho_x:=({1-p_x})p_x^{-1}$ for $x\in\Z$.
	     \item \label{as_A4_paper3} $\mathbb{P}[\rho_0=1]<1$.
	     \item \label{as_A5_paper3} $\mathbb{P}[M_0=0]>0$.
        \end{enumerate}
\end{assumptionletter} 

If Assumption \ref{as_A_paper3} holds with $\mathbb{P}[M_x=0]=1$, the process belongs to the class of \emph{random walks in random environments} (RWRE for short). For an overview and results on RWREs we refer the reader to \cite{Zeitouni} and references therein.
In the most studied ERW model, a simple symmetric random walk is perturbed by cookies; commonly, the number of cookies per site is bounded, but the cookies may have strength between 0 and~1, see e.g.\ \cite{Basdevant08b, Zerner08, KosyginaZerner12}.
In order to emphasize that the underlying process in our model is an RWRE,  
we call our model described above \emph{excited random walk in a random environment} (ERWRE for short). This model has already been introduced by the author in \cite{Bauernschubert12, Bauernschubert13}.
Assumption \ref{as_A4_paper3} excludes the simple symmetric random walk as underlying dynamic. By Assumption \ref{as_A5_paper3} we avoid the trivial case where the random walker encounters at least one cookie on every integer $\mathbb{P}$-a.s.

Under Assumption \ref{as_A1_paper3}, $\omega$ is given $\mathbb{P}$-a.s.\ by a sequence $(p,M):=(p_x,M_x)_{x\in\Z}$.
For clarity and convenience let us therefore write $P_{x,(p,M)}$ for the quenched measure instead of $P_{x,\omega}$ and just $P_{(p,M)}$ if $x=0$.

For a random walk in an i.i.d.\ random environment, Solomon proved in \cite[Theorem~(1.7)]{Solomon75} the following recurrence and transience criteria.
Suppose that Assumptions \ref{as_A1_paper3} and \ref{as_A2_paper3} hold with $\mathbb{P}[M_x=0]=1$ and that $\mathbb{E}[\log \rho_0]$ is well defined. Then, $P_0$-a.s.,
$\lim_{n\to\infty}S_n = +\infty$ if $\mathbb{E}[\log \rho_0]<0$,
$\lim_{n\to\infty}S_n = -\infty$ if $\mathbb{E}[\log \rho_0]>0$ and
$(S_n)_{n\geq0}$ is recurrent
if $\mathbb{E}[\log \rho_0]=0$. 

Theorem 1.1 in \cite{Bauernschubert12} and Theorem 1 in \cite{Bauernschubert13} provide transience and recurrence criteria for an ERWRE with underlying left-transient or recurrent RWRE.
In the present version we dropped the restriction $\mathbb{P}[M_x=\infty]=0$ by allowing $\mathbb{P}[M_x=\infty]\in[0,1)$.
The notation $(\cdot)_+$ abbreviates $\max(0,\cdot)$.

\begin{theorem}[\cite{Bauernschubert12}]
  \label{th:Bauernschubert12}
 Let Assumption \ref{as_A_paper3} hold and assume that
 $\mathbb{E}[\log\rho_0]>0$.
\begin{itemize}[font=\normalfont]
 \item[(i)] If $\mathbb{E}[(\log M_0)_+]<\infty$, then
$\lim_{n\to\infty}S_n=-\infty$ $P_0$-a.s.
 \item[(ii)]  If $\mathbb{E}[(\log M_0)_+]=\infty$ and if
$\limsup_{t\to\infty}(t\cdot\mathbb{P}[\log{M_0}>t])<\mathbb{E}[\log \rho_0]$,
then $S_n=0$ infinitely often $P_0$-a.s.
 \item[(iii)] If
$\liminf_{t\to\infty}(t\cdot\mathbb{P}[\log{M_0}>t])>\mathbb{E}[\log \rho_0]$,
then $\lim_{n\to\infty}S_n=+\infty$ $P_0$-a.s. 
\end{itemize}
\end{theorem}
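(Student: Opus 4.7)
The core tool is the Kesten--Kozlov--Spitzer correspondence in the form already used in \cite{Bauernschubert12, Bauernschubert13}. For $n \geq 1$, let $T_{-n} := \inf\{k \geq 0 : S_k = -n\}$, and for $-n \leq k \leq -1$ let $D_k^n$ denote the number of jumps from $k+1$ to $k$ before $T_{-n}$. Reading $k$ from $-1$ downward, the process $(D_k^n)$ is a branching process in the i.i.d.\ random environment $(p_x, M_x)$ with migration (BPREM): the offspring component at level $k$ is essentially geometric with parameter $p_{k+1}$ (reproductive mean $\rho_{k+1}$), while the cookies contribute migration, because each of the first $\min(V_{k+1},M_{k+1})$ visits to $k+1$ is a forced rightward jump and so does not produce a downcrossing. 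An analogous branching process built from upcrossings at positive levels governs $\{S_n \to +\infty\}$. Thus $\{S_n \to -\infty\}$ is, modulo null events, equivalent to $\{T_{-n} < \infty\ \forall n\}$, i.e.\ to survival of the left-going BPREM; similarly $\{S_n \to +\infty\}$ is equivalent to survival of the right-going analogue.

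Under $\mathbb{E}[\log\rho_0] > 0$, the reproductive random walk $\sum_{j=1}^k \log\rho_j$ has positive linear drift, so in each case the question is whether the migration coming from the $M_k$'s can overwhelm this drift. For (i), $\mathbb{E}[(\log M_0)_+] < \infty$ implies by Borel--Cantelli that $\log M_k = o(k)$ $\mathbb{P}$-a.s., so the migration is dwarfed by the reproductive growth; the left-going BPREM survives, hence $T_{-n} < \infty$ for all $n$. For (iii), the tail hypothesis $\liminf_t t\,\mathbb{P}[\log M_0 > t] > \mathbb{E}[\log \rho_0]$ is precisely the threshold ensuring that values of $\log M_{k+1}$ exceeding the current partial sum $\sum_{j=1}^k \log\rho_j$ occur infinitely often $\mathbb{P}$-a.s.; the BPREM is then driven to extinction, and a symmetric analysis of the right-going BPREM (which also receives a net rightward push from the cookies) gives $T_n < \infty$ for all $n$. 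The intermediate regime (ii) uses the same tail comparison with the opposite inequality: the left-going BPREM is still extinguished but not sharply enough to produce right-transience; a standard zero--one law, as in \cite{Bauernschubert12}, rules out oscillation and leaves only $S_n = 0$ infinitely often.

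The new ingredient relative to \cite{Bauernschubert12} is the relaxation $\mathbb{P}[M_x = \infty] \in [0,1)$. A site with $M_x = \infty$ acts as a one-way reflector: once the walker is at $x+1$ or beyond, every return to $x$ is immediately followed by a jump back to $x+1$, so the half-line $(-\infty,x]$ is never re-entered. Under Assumption \ref{as_A_paper3} with $\mathbb{P}[M_x=\infty]>0$, these reflectors form, $\mathbb{P}$-a.s., a doubly infinite i.i.d.-spaced set $\cdots < R_{-1} < R_0 < R_1 < \cdots$. The trajectory decomposes into excursions in successive blocks $[R_j+1,R_{j+1}]$, so either the walker crosses every $R_j$ (which already forces $S_n \to +\infty$) or it is eventually trapped in some half-line $(-\infty, R_j]$, on which the conditional environment has all $M_y < \infty$ and the BPREM analysis of the previous two paragraphs applies verbatim. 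I would therefore reduce to the $\mathbb{P}[M_x=\infty]=0$ situation on each trapping half-line and combine the conclusions across blocks by i.i.d.\ Borel--Cantelli. The main obstacle I anticipate is the bookkeeping for this reduction, in particular verifying that the BPREM built on a half-line with a hard right reflector retains the sharp survival/extinction dichotomy stated in (i)--(iii); once this is in hand, the three cases follow from the corresponding cases of \cite[Theorem 1.1]{Bauernschubert12} applied blockwise.
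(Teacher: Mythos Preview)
This theorem is not proved in the present paper. It is quoted from \cite{Bauernschubert12} (with the remark that the restriction $\mathbb{P}[M_x=\infty]=0$ has been relaxed to $\mathbb{P}[M_x=\infty]\in[0,1)$), and then used as a tool in the proofs of Theorems~\ref{th:speed_exists}, \ref{th:1_transientRE} and \ref{th:2_transientRE}. There is therefore no proof here to compare your proposal against; any comparison would have to be with the argument in \cite{Bauernschubert12} itself.

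That said, a few remarks on your sketch. The general strategy---reduce the transience/recurrence trichotomy to a survival/extinction question for a branching process in random environment with migration built from level crossings, and then compare the growth of the associated random walk $\sum_j \log\rho_j$ with the size of the migration term governed by $M_k$---is indeed the approach of \cite{Bauernschubert12}. However, your identification ``$\{S_n\to-\infty\}$ is equivalent to survival of the left-going BPREM'' is not how the correspondence runs: the branching process built from downcrossings encodes excursions to the \emph{left} of successive right-maxima, so its extinction (not survival) is what corresponds to right-transience, and the trichotomy in (i)--(iii) comes from combining this with a zero--one argument rather than from two separate branching processes for the two directions. Your statement that in case~(i) ``the left-going BPREM survives, hence $T_{-n}<\infty$ for all $n$'' conflates survival of a branching process with reaching arbitrarily negative sites; these are not the same object. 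Finally, your proposed reduction for $\mathbb{P}[M_x=\infty]>0$ via one-way reflectors is reasonable in spirit, but note that under the hypothesis $\mathbb{E}[\log\rho_0]>0$ of this theorem the interesting regime is when the walk tries to go left against the reflectors; the blockwise decomposition you describe would need to be carried out carefully, and the paper does not indicate how (or whether) the authors of \cite{Bauernschubert12} handle this extension.
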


Similar criteria in the case where the RWRE is recurrent are provided in \cite{Bauernschubert13}.

In the current work we study how ``fast`` the random walks in Theorem \ref{th:Bauernschubert12} and in \cite{Bauernschubert13} go to
infinity when they are transient (to the left or to the right).
Therefore note that by \cite[Theorem~4.1]{KosyginaZerner12} $(S_n)_{n\geq0}$
satisfies in the setting of Theorem \ref{th:Bauernschubert12} a strong law of large numbers, i.e.\ there 
exists a non-random $\nu\in[-1,1]$ such that
  \[
    \lim_{n\to\infty}\frac{S_n}{n}=\nu \quad P_0\text{-a.s.}
  \]
  
  This limit $\nu$ is called \emph{speed} or \emph{velocity} of the random walk,
  see also \cite[Sections~4 and~5]{KosyginaZerner12}. If the underlying dynamic in the ERW model is the simple symmetric random walk and if the number of cookies (here with strength strictly in between 0 and 1) 
  is bounded --- i.e.\ there is a deterministic $K\in\N$ such that $\omega(x,i)=\frac{1}{2}$ for all $i>K$ and $x\in\Z$ a.s.\ ---,
  then results on the speed can be found in \cite[Theorem~1.1]{Basdevant08b} and \cite[Theorem~2]{Zerner08}. The key parameter in this case turned out to be the average total drift per site
  \[
    \bar{\delta}:=\mathbb{E}\Bigg[\sum_{i\geq1}{(2\omega(0,i)-1)}\Bigg].
  \]
  Under some weak ellipticity assumptions, it has been obtained that $\nu=0$ if $\bar{\delta}\in[-2,2]$, $\nu<0$ if $\bar{\delta}<-2$ and $\nu>0$ if $\bar{\delta}>2$, \cite{Basdevant08b, Zerner08}.

The speed for an RWRE is given in \cite[Theorem (1.16)]{Solomon75}, see also \cite[Theorem 2.1.9 + Remark]{Zeitouni}.
Let us briefly recall Solomon's result.
\begin{theorem}[\cite{Solomon75}]
  \label{th:RWRE_speed}
  Let Assumptions \ref{as_A1_paper3} and \ref{as_A2_paper3} hold with $\mathbb{P}[M_x=0]=1$ (RWRE) and let $\mathbb{E}[\log\rho_0]$ be well defined. Then, $P_0$-a.s.,
   \begin{itemize}[font=\normalfont]
    \item[(i)] $\nu=\frac{1-\mathbb{E}[\rho_0]}{1+\mathbb{E}[\rho_0]}>0$ if $\mathbb{E}[\rho_0]<1$,
    \item[(ii)] $\nu=-\frac{1-\mathbb{E}[\rho_0^{-1}]}{1+\mathbb{E}[\rho_0^{-1}]}<0$ if $\mathbb{E}[\rho_0^{-1}]<1$ and
    \item[(iii)] $\nu=0$ if $\mathbb{E}[\rho_0]^{-1}\leq 1\leq\mathbb{E}[\rho_0^{-1}]$.
   \end{itemize}
\end{theorem}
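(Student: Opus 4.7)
The plan is to translate the law of large numbers for $S_n$ into one for the hitting times $T_n:=\inf\{k\geq 0:\:S_k=n\}$, because in an i.i.d.\ environment $T_n$ is a sum of a stationary sequence whose marginal is computable in closed form. Throughout I set $\rho_x:=(1-p_x)/p_x$ and, with $M_x\equiv 0$, identify $\omega$ with $(p_x)_{x\in\Z}$.

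First I would carry out a first-step analysis under the quenched measure. Setting $f(x):=E_{x,\omega}[T_{x+1}]$ on $\{T_{x+1}<\infty\}$, the recursion
\[
f(x)=p_x\cdot 1+(1-p_x)\bigl(1+f(x-1)+f(x)\bigr)
\]
rearranges to $f(x)=\tfrac{1}{p_x}+\rho_x f(x-1)$ and iterates to the explicit series
\[
f(0)=\sum_{i\geq 0}\frac{1}{p_{-i}}\prod_{j=0}^{i-1}\rho_{-j},
\]
(empty product equal to $1$). By Assumption \ref{as_A2_paper3} the $\rho_x$ are i.i.d., and Fubini with the identity $1/p_0=1+\rho_0$ gives
\[
E_0[T_1]=\mathbb{E}[f(0)]=\mathbb{E}\!\left[\tfrac{1}{p_0}\right]\sum_{i\geq 0}\mathbb{E}[\rho_0]^{i}=\frac{1+\mathbb{E}[\rho_0]}{1-\mathbb{E}[\rho_0]}
\]
whenever $\mathbb{E}[\rho_0]<1$, and $E_0[T_1]=+\infty$ otherwise.

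Next I would invoke the ergodic theorem to pass from $E_0[T_1]$ to $\lim T_n/n$. Under the quenched measure the increments $T_{k+1}-T_k$ are independent by the strong Markov property, and the i.i.d.\ structure of $(\rho_x)_x$ makes the sequence $(T_{k+1}-T_k)_{k\geq 0}$ stationary and ergodic under the annealed $P_0$ (the shift on environments is ergodic, and the quenched law of the increment is a function of the shifted environment). Birkhoff's theorem then yields $T_n/n\to E_0[T_1]$ $P_0$-a.s., allowing the value $+\infty$. On $\{S_n\to+\infty\}$ one has $T_{S_n}\leq n<T_{S_n+1}$ and $S_n\to\infty$, so dividing by $T_{S_n}$ and applying the ergodic limit along the subsequence gives
\[
\nu=\lim_{n\to\infty}\frac{S_n}{n}=\frac{1}{E_0[T_1]}=\frac{1-\mathbb{E}[\rho_0]}{1+\mathbb{E}[\rho_0]}\qquad\text{when }\mathbb{E}[\rho_0]<1,
\]
which under $\mathbb{E}[\rho_0]<1$ also forces $\mathbb{E}[\log\rho_0]<0$ (Jensen) and hence $S_n\to+\infty$ by the RWRE dichotomy recalled in the excerpt. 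This is case (i).

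Case (ii) follows by the obvious reflection $x\mapsto -x$, which swaps $p_x\leftrightarrow 1-p_x$ and hence $\rho_x\leftrightarrow \rho_x^{-1}$; applying the previous argument to the reflected walk yields the stated formula in terms of $\mathbb{E}[\rho_0^{-1}]$. For case (iii) there are three subcases: if $\mathbb{E}[\log\rho_0]<0$ but $\mathbb{E}[\rho_0]\geq 1$, then $S_n\to+\infty$ while the series giving $E_0[T_1]$ diverges, so $T_n/n\to\infty$ and $\nu=0$; the symmetric subcase $\mathbb{E}[\log\rho_0]>0$ with $\mathbb{E}[\rho_0^{-1}]\geq 1$ is analogous; and when $\mathbb{E}[\log\rho_0]=0$ the walk is recurrent, which forces $\liminf S_n/n\leq 0\leq\limsup S_n/n$ and hence, combined with the already established existence of the almost sure limit $\nu$, yields $\nu=0$. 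The main obstacle I expect is Step~3: verifying carefully that $(T_{k+1}-T_k)_{k\geq 0}$ is ergodic (not merely stationary) under $P_0$, so that Birkhoff applies and the a.s.\ limit of $T_n/n$ is indeed the constant $E_0[T_1]$ rather than a non-trivial random variable; the rest is computation and interpolation between $T_{S_n}$ and $n$.
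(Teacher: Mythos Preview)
The paper does not prove this theorem at all: it is quoted as Solomon's classical result \cite{Solomon75} (see also \cite[Theorem 2.1.9]{Zeitouni}) and is used as a black box in the proofs of Theorems~\ref{th:1_transientRE} and~\ref{th:2_transientRE}. So there is no ``paper's own proof'' to compare to.

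That said, your argument is the standard one and is essentially how the result is proven in the references. One small gap: in the recurrent subcase $\mathbb{E}[\log\rho_0]=0$ you write ``combined with the already established existence of the almost sure limit $\nu$'', but nothing in your outline has established that the limit exists in this case. You don't need to assume it: your own ingredients suffice. Since the walk is recurrent, $T_m<\infty$ for all $m\in\Z$, and by Jensen both $\mathbb{E}[\rho_0]\geq 1$ and $\mathbb{E}[\rho_0^{-1}]\geq 1$, so your ergodic step gives $T_m/m\to\infty$ and $T_{-m}/m\to\infty$. If $\limsup S_n/n>\varepsilon>0$ along a subsequence $n_k$, then $S_{n_k}\to\infty$ and $T_{S_{n_k}}\leq n_k$ forces $T_{S_{n_k}}/S_{n_k}\leq 1/\varepsilon$, contradicting $T_m/m\to\infty$; the symmetric argument handles $\liminf$. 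This shows directly that $S_n/n\to 0$ without presupposing the limit exists.
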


For the ERWRE model studied in \cite{Bauernschubert12, Bauernschubert13}  
we will show the following results on the speed in the present paper.
\begin{theorem}
  \label{th:1_transientRE}
  Let Assumption $\ref{as_A_paper3}$ hold and suppose the underlying RWRE is left-transient or recurrent, i.e.\ $\mathbb{E}[\log\rho_0]\geq0$.
     \begin{itemize}[font=\normalfont]
      \item[(i)] If $\mathbb{E}[M_0]<\infty$ and $\mathbb{E}[\rho_0^{-1}]<1$ then the ERWRE goes to $-\infty$ with negative speed: $\nu=\lim_{n\to\infty}\frac{S_n}{n}<0$ $P_0$-a.s.
      \item[(ii)] If $\mathbb{E}[M_0]=\infty$ or $\mathbb{E}[\rho_0^{-1}]\geq1$, and if additionally $\mathbb{E}[\rho_0] > \mathbb{P}[M_0<\infty]^{-1}$ then
      $\nu=0$.
      \item[(iii)] If $\mathbb{E}[\rho_0] < \mathbb{P}[M_0<\infty]^{-1}$ then $\nu>0$. 
     \end{itemize}
  \end{theorem}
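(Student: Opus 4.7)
My plan is to translate each speed statement into a statement about a branching process in a random environment with migration (BPREM), following the correspondence advertised in the abstract. For the walk started at $0$, write $T_a=\inf\{n\geq 0:S_n=a\}$; counting edge-crossings gives
\[
T_1 = 1 + 2\sum_{k\geq 0} Z_k, \qquad T_{-1} = 1 + 2\sum_{k\geq 1} W_k,
\]
where $Z_k$ is the number of steps from $-k-1$ to $-k$ before $T_1$ and $W_k$ is the number of steps from $k$ to $k-1$ before $T_{-1}$. The law of large numbers for hitting times (via the regeneration structure and \cite[Theorem~4.1]{KosyginaZerner12}) then gives $\nu=1/E_0[T_1]$ in the right-transient case and $\nu=-1/E_0[T_{-1}]$ in the left-transient case, with the convention that an infinite expectation yields $\nu=0$. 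So each part reduces to deciding whether the relevant hitting-time expectation is finite.

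For part (iii) the walk is right-transient, by Theorem~\ref{th:RWRE_speed}(i) when $M_0<\infty$ almost surely (since then the hypothesis forces $\mathbb{E}[\rho_0]<1$) and by Theorem~\ref{th:Bauernschubert12}(iii) when $\mathbb{P}[M_0=\infty]>0$. At site $-k-1$ the first $M_{-k-1}$ visits are forced up by cookies and the remaining visits are i.i.d.\ Bernoulli; the standard negative-binomial description of down-runs between successive up-steps yields the BPREM
\[
Z_{k+1}=\sum_{j=1}^{(Z_k-M_{-k-1})^+}\xi_j^{(k)},\qquad \xi_j^{(k)}\sim\operatorname{Geom}(p_{-k-1}),
\]
with offspring mean $\rho_{-k-1}$. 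Using $(x-y)^+\leq x\,\mathbf{1}_{y<\infty}$ together with independence of $(\rho_0,M_0)$ from the sites contributing to $Z_k$, one obtains
\[
E_0[Z_{k+1}]\;\leq\;\mathbb{E}[\rho_0]\,\mathbb{P}[M_0<\infty]\,E_0[Z_k],
\]
a strict contraction by hypothesis, hence $\sum_k E_0[Z_k]<\infty$, $E_0[T_1]<\infty$, and $\nu>0$.

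For part (i), Jensen's inequality converts $\mathbb{E}[\rho_0^{-1}]<1$ into $\mathbb{E}[\log\rho_0]>0$ and $\mathbb{E}[M_0]<\infty$ into $\mathbb{E}[(\log M_0)_+]<\infty$, so Theorem~\ref{th:Bauernschubert12}(i) gives left-transience. The dual BPREM for the walk run until $T_{-1}$ is now an immigration process,
\[
W_{k+1}=M_k\,\mathbf{1}_{\{W_k\geq 1\}}+\sum_{j=1}^{W_k}G_j^{(k)},\qquad G_j^{(k)}\sim\operatorname{Geom}(q_k),
\]
with annealed offspring mean $\mathbb{E}[\rho_0^{-1}]<1$ and immigration mean $\mathbb{E}[M_0]<\infty$. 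The central task is to prove $E_0[\tau_0]<\infty$ for $\tau_0:=\inf\{k\geq 1:W_k=0\}$. I plan to do this by Foster--Lyapunov with $\phi(w)=w$: the annealed conditional drift equals $\mathbb{E}[M_0]-(1-\mathbb{E}[\rho_0^{-1}])w$, which is $\leq -1$ for $w$ outside a bounded set $\{1,\ldots,K\}$; while on this bounded set the one-step extinction probability is bounded below by $\mathbb{P}[M_0=0]\cdot\mathbb{E}[(1-p_0)^K]>0$ thanks to Assumptions~\ref{as_A4_paper3} and~\ref{as_A5_paper3}. Then $\sum_{k\geq 1}\mathbb{P}_0[W_k\geq 1]=E_0[\tau_0]-1<\infty$, and feeding this into the linear recursion $E_0[W_{k+1}]=\mathbb{E}[M_0]\,\mathbb{P}_0[W_k\geq 1]+\mathbb{E}[\rho_0^{-1}]\,E_0[W_k]$ yields $\sum_k E_0[W_k]<\infty$, so $E_0[T_{-1}]<\infty$ and $\nu<0$.

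For part (ii) I argue $\nu\leq 0$ and $\nu\geq 0$ separately. The hypothesis $\mathbb{E}[\rho_0]>\mathbb{P}[M_0<\infty]^{-1}$ makes the right-direction BPREM supercritical in mean, so $E_0[Z_k]$ fails to be summable; hence either the walk is not right-transient or $E_0[T_1]=\infty$, and in both cases $\nu\leq 0$. Complementarily, $\mathbb{E}[\rho_0^{-1}]\geq 1$ destroys the negative-drift condition of part (i), and $\mathbb{E}[M_0]=\infty$ blows up the immigration term; in either sub-case $E_0[T_{-1}]=\infty$ (or the walk is not left-transient), yielding $\nu\geq 0$. Combining, $\nu=0$. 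The principal obstacle will be the Foster--Lyapunov step of part (i): left-transience alone only delivers $W_k\to 0$ almost surely, whereas the conclusion $E_0[T_{-1}]<\infty$ needs the strictly stronger $L^1$-summability of $(W_k)$, and it is exactly the interplay between finite immigration mean, strictly subcritical offspring mean, and the Assumption~\ref{as_A5_paper3} positivity of $\mathbb{P}[M_0=0]$ that supplies it.
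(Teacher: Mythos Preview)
Your overall plan—translate speed questions into statements about branching processes with migration—matches the paper's philosophy, but two genuine gaps undermine the execution.

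\textbf{The speed formulas $\nu=\pm 1/E_0[T_{\pm 1}]$ are not valid here.} For RWRE without cookies the increments $T_{k+1}-T_k$ are i.i.d.\ under the annealed law and the formula holds; for ERW they are not, because by time $T_k$ cookies below level $k$ have been consumed, so $T_{k+1}-T_k$ is (stochastically) \emph{larger} than a fresh copy of $T_1$. The paper therefore uses different devices: for the right-transient direction the regeneration times $\tau_1,\tau_2$ and the identity $\nu=E_0[S_{\tau_2}-S_{\tau_1}]/E_0[\tau_2-\tau_1]$ (Section~\ref{sec:translationERWRE_BPMRE}); for the left-transient direction Zerner's formula $1/\nu=-\sum_{j\geq 1}P_0[T_{-j-1}-T_{-j}\geq j]$, which the paper then identifies with $-E_{RE,\leq 0}[T_{-1}]$, the expectation in an environment with cookies \emph{only on non-positive sites}. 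This is not $E_0[T_{-1}]$. Consequently your contraction bound $E_0[Z_{k+1}]\leq \mathbb{E}[\rho_0]\mathbb{P}[M_0<\infty]\,E_0[Z_k]$ in part~(iii), while correct for your process $(Z_k)$, does not transfer to the relevant branching process $(W_k)$ of Section~\ref{sec:translationERWRE_BPMRE}, which carries one immigrant per generation; with that $+1$ the same inequality only gives $E_0[W_k]$ bounded, not summable. The paper sidesteps this entirely: for (iii) it deletes all finite cookie stacks, interprets the result as an RWRE with $\tilde{\rho}_0=\rho_0\mathbf{1}_{\{M_0<\infty\}}$, applies Solomon, and concludes by monotonicity.

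\textbf{The heart of part (ii) is missing.} You write that $\mathbb{E}[\rho_0]>\mathbb{P}[M_0<\infty]^{-1}$ ``makes the right-direction BPREM supercritical in mean, so $E_0[Z_k]$ fails to be summable.'' But supercriticality of the annealed offspring mean does \emph{not} by itself give $E_0[\sum_k Z_k]=\infty$: the emigration $M_k$ can wipe out the population while it is still small, and the process is absorbed at~$0$. Proving $E_0[\sum_k Z_k]=\infty$ under exactly this hypothesis is the content of Proposition~\ref{prop:supercriticalBPMRE}, whose proof occupies all of Section~\ref{sec:supercriticalBPMRE}: it couples $(Z_k)$ with a random difference equation, forces the associated random walk $Y_j=\sum_{i\leq j}\log\rho_i$ to stay above a linear barrier, controls the emigration on a good event, and combines Chebyshev, the FKG inequality, and first-passage asymptotics for $(Y_j)$. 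None of this is trivial, and it is precisely here—not in the Foster--Lyapunov step of part~(i)—that the principal difficulty lies. (For (i) the paper simply computes $E_{RE,\leq 0}[T_{-1}]=\mathbb{E}[M_0](1+E_{RE}[T_{-1}])+E_{RE}[T_{-1}]$ directly, which is much lighter than your Lyapunov route.)
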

Note that (i)-(iii) in Theorem \ref{th:1_transientRE} cover all cases except for $\mathbb{E}[\rho_0] = \mathbb{P}[M_0<\infty]^{-1}$.

\begin{remark}
  By Jensen's inequality and Assumption \ref{as_A4_paper3}, $\mathbb{E}[\log\rho_0]\geq0$ implies that $\mathbb{E}[\rho_0]>1$. Hence, if $M_0$ is finite $\mathbb{P}$-a.s.\ and if the underlying RWRE is left-transient or recurrent there is no chance for the random walker to go to infinity with positive speed in the setting of Theorem \ref{th:1_transientRE}. This is only possible if there are ``enough'' infinite cookie stacks.
  On the other hand, if the RWRE tends to $-\infty$ with negative speed (i.e.\ $\mathbb{E}[\rho_0^{-1}]<1$), the cookies may slow it down without changing its transience behavior. According to Theorems \ref{th:Bauernschubert12} and \ref{th:1_transientRE} this occurs if $\mathbb{E}[M_0]=\infty$, but $\mathbb{E}[(\log M_0)_+]<\infty$.
\end{remark}  
 
Further questions concern an ERWRE where the underlying dynamic is transient to the right with zero speed. Can cookies accelerate this RWRE? How many cookies of maximal strength can be placed without increasing the speed and what is the influence of the distribution of $\rho_0$? Answers --- but not yet in a complete version --- are given in the next theorem.  
\begin{theorem}
  \label{th:2_transientRE}
  Let Assumption $\ref{as_A_paper3}$ hold and suppose that the underlying RWRE is right-transient with zero speed, i.e.\ $\mathbb{E}[\log\rho_0]< 0$ and $\mathbb{E}[\rho_0]\geq 1$.
  \begin{itemize}[font=\normalfont]
   \item[(i)] Assume that $\gamma\,\mathbb{E}[\rho_0]\,\mathbb{P}[M_0<\infty]>1$, where $\gamma=\mathbb{E}[\rho_0^{\beta}]$ with $\beta$ such that \linebreak
   $\mathbb{E}[\rho_0^{\beta}\log\rho_0]=0$. Then $\nu=\lim_{n\to\infty}\frac{S_n}{n}=0$, $P_0$-a.s.
   \item[(ii)] If $\mathbb{E}[\rho_0]\geq\mathbb{P}[M_0=0]^{-1}$ then $\nu=0$.
   \item[(iii)] If $\mathbb{E}[\rho_0] < \mathbb{P}[M_0<\infty]^{-1}$ then $\nu>0$.
  \end{itemize}
  \end{theorem}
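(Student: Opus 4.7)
The unifying tool, as flagged in the abstract, is the correspondence between the ERWRE and a branching process in a random environment with migration (BPRE-M). For a walk started at $0$ and the hitting time $T_n$ of level $n$, let $V_k^{(n)}$ be the number of jumps from $k+1$ to $k$ before $T_n$. Then $T_n = n + 2\sum_{k<n}V_k^{(n)}$, and, read backwards in $k$, the sequence $(V_{n-1}^{(n)},V_{n-2}^{(n)},\dots)$ is Markov in the quenched environment: each individual contributes a geometric$(p_{k+1})$ number of offspring with mean $\rho_{k+1}$, while the cookies at $k+1$ intercept the first $M_{k+1}$ would-be downcrossings, removing up to $M_{k+1}$ particles per generation (the migration). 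Under the hypotheses $\mathbb{E}[\log\rho_0]<0$ and $\mathbb{E}[\rho_0]\ge 1$, Solomon and Theorem \ref{th:Bauernschubert12} yield $S_n\to+\infty$ $P_0$-a.s., and the classical identity $\nu=1/E_0[T_1]$ (with $1/\infty:=0$) reduces each item to the question of finiteness of $E_0[T_1]$.

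\emph{Part (iii).} The hypothesis $\mathbb{E}[\rho_0]\,\mathbb{P}[M_0<\infty]<1$ exactly says that the BPRE-M is strictly subcritical in the annealed sense: with probability $\mathbb{P}[M_0=\infty]$ the lineage at site $k+1$ is killed entirely, and otherwise it propagates with mean $\rho_{k+1}$. I expect a geometric bound of the form $E_0[\text{visits to }-k\text{ before }T_1]\le C\,(\mathbb{E}[\rho_0]\mathbb{P}[M_0<\infty])^k$ which, summed over $k\ge 0$, yields $E_0[T_1]<\infty$ and hence $\nu>0$.

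\emph{Part (ii).} Here the goal is the opposite direction $E_0[T_1]=\infty$. The plan is to consider only the i.i.d.\ positive-density sublattice $\{x:M_x=0\}$ of genuinely cookie-free sites and to compare the walk on these sites with a pure RWRE on the thinned lattice. The condition $\mathbb{E}[\rho_0]\,\mathbb{P}[M_0=0]\ge 1$ translates (after accounting for the independent spacings) into the Solomon zero-speed regime $\mathbb{E}[\text{effective }\rho]\ge 1$ for this embedded RWRE, so its hitting times have infinite mean. Transferring this lower bound back to the ERWRE---the original walk must cross every cookie-free site and cannot avoid its RWRE-type recurrence---gives $E_0[T_1]=\infty$ and hence $\nu=0$.

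\emph{Part (i).} This is the delicate case and, I expect, the main obstacle. Here $\beta\in(0,1]$ is the Kesten--Kozlov--Spitzer exponent, i.e.\ the positive minimizer of the strictly convex function $s\mapsto\mathbb{E}[\rho_0^s]$ on $[0,\infty)$, with $\gamma=\mathbb{E}[\rho_0^\beta]$. The strategy is to transfer the classical heavy-tailed lower bound $\mathbb{P}[T_n>t]\gtrsim t^{-\alpha}$ from RWRE hitting times to the BPRE-M and to quantify how the cookie migration shifts the effective tail exponent. The hypothesis $\gamma\,\mathbb{E}[\rho_0]\,\mathbb{P}[M_0<\infty]>1$ is precisely the statement that the $\beta$-th moment of the one-generation propagator, after cookie-induced killing at frequency $\mathbb{P}[M_0=\infty]$, still exceeds $1$; a Kesten-style renewal/large-deviation argument should then yield $T_n/n\to\infty$ on an event of positive probability, whence almost surely by ergodicity. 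The principal hurdle will be controlling the $\beta$-th moment through the migration without destroying the sharp heavy-tailed lower bound.
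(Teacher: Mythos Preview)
Your framework is reasonable, but you miss the paper's key simplification for parts (ii) and (iii), and your plan for (ii) has a genuine gap.

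\textbf{Parts (ii) and (iii).} The paper does \emph{not} argue via branching processes or embedded walks here; it uses monotonicity of the speed in the cookie environment together with Solomon's RWRE theorem. The observation is that if $M_0\in\{0,\infty\}$ a.s., the ERWRE is literally an RWRE with $\tilde p_x=1$ on $\{M_x=\infty\}$ and $\tilde p_x=p_x$ otherwise, so $\mathbb{E}[\tilde\rho_0]=\mathbb{E}[\rho_0]\,\mathbb{P}[M_0<\infty]$ and Solomon applies directly. For general $M_0$: to get (iii), \emph{delete} all finite cookie stacks; the resulting $\{0,\infty\}$-valued environment has smaller speed by monotonicity and positive speed by Solomon since $\mathbb{E}[\rho_0]\mathbb{P}[M_0<\infty]<1$. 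To get (ii), \emph{replace} all nonempty finite stacks by infinite ones; the new $\{0,\infty\}$-valued environment has $\mathbb{P}[\text{finite}]=\mathbb{P}[M_0=0]$, hence zero speed by Solomon when $\mathbb{E}[\rho_0]\mathbb{P}[M_0=0]\ge 1$, and the original speed is sandwiched between $0$ and this. This is a two-line argument once the monotonicity lemma is available.

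Your proposed route for (ii) --- restricting to the sublattice $\{x:M_x=0\}$ and reading off an ``effective $\rho$'' --- does not work as stated. The induced walk on cookie-free sites is not an RWRE: transitions between consecutive cookie-free sites depend on the full environment in between, including cookie piles (in particular one-way barriers at infinite stacks), so there is no clean product formula making $\mathbb{E}[\rho_0]\mathbb{P}[M_0=0]$ the Solomon parameter of the embedded chain. The ``translation'' you assert is the missing step, and I do not see how to justify it. Your (iii) via a geometric moment bound on the backward branching process is plausible in spirit, but the single immigrant per generation and the stopping at $T_0^W$ require care; the monotonicity argument bypasses all of this.

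\textbf{Part (i).} Here the paper does use the BPRE-with-migration correspondence, but not via Kesten--Kozlov--Spitzer tail asymptotics. Instead it compares the backward process $(W_k)$ to an auxiliary BPRE with pure emigration, $Z_k=(\sum_{i\le Z_{k-1}}\xi_i^{(k)}-M_{k+1})_+$, shows $W_k\ge Z_k$ on $\{M_1=0\}$ up to extinction, and invokes a separately proved proposition (the paper's Proposition~\ref{prop:supercriticalBPMRE}) that $E_0[\sum_k Z_k]=\infty$ when $\gamma\,\mathbb{E}[\rho_1]\,\mathbb{P}[M_1<\infty]>1$. That proposition is the real content: its proof couples $(Z_k)$ to the random difference equation $X_k=(\rho_k X_{k-1}-M_k)_+$, forces the associated random walk $Y_j=\sum_{i\le j}\log\rho_i$ to stay nonnegative, and uses the known subcritical fluctuation estimate $\mathbb{P}[\min_{j\le m}Y_j\ge 0]\asymp m^{-3/2}\gamma^m$ together with an FKG inequality. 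Your sketch gestures at the right exponent $\gamma$ but does not identify this comparison or the fluctuation input; ``a Kesten-style renewal/large-deviation argument should then yield $T_n/n\to\infty$'' is not yet a proof plan. Also note that the paper works with the regeneration identity $\nu=E_0[S_{\tau_2}-S_{\tau_1}]/E_0[\tau_2-\tau_1]$ rather than $\nu=1/E_0[T_1]$, since for ERW the increments $T_{k+1}-T_k$ are not i.i.d.
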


\begin{remark}
  Let us remark that $\beta$ in Theorem \ref{th:2_transientRE}(i) exists, is unique and $0<\beta<1$. Moreover $\gamma <1$.
  To see this use the moment generating function $g(t):=\mathbb{E}[\rho_0^{t}]$, $t\in\R$, and recall its properties e.g.\ from \cite{Billingsley_probability}.
  By Assumption \ref{as_A3_paper3}, $g(t)$ is finite on $[0,2]$.
  Furthermore note that the derivative is $g'(t)=\mathbb{E}[\rho_0^{t}\log\rho_0]$ and under the assumptions of Theorem \ref{th:2_transientRE},
  $g(0)=1$, $1\leq g(1)<\infty$ and $-\infty<g'(0)<0$.
\end{remark}
  
Note that it is still open if it is possible to obtain positive speed if the underlying RWRE is transient to the right with zero speed and $M_0$ is finite $\mathbb{P}$-a.s.
  
To prove Theorems \ref{th:1_transientRE} and \ref{th:2_transientRE} we use three tools.
In the situation when the ERWRE goes to $-\infty$ it is not hard to prove non-zero or zero speed.
Basically one uses the formulation of the speed known from e.g.\ \cite[Theorem 13]{Zerner05} or \cite[Section~7]{Zerner08}.
One method to study the speed when the ERWRE goes to infinity, is based on a well-known regeneration or renewal structure of these random walks and a relation to certain branching processes, see e.g.\ \cite{KestenKozlovSpitzer75} in the case of RWREs and \cite[Section~6]{Zerner08}, \cite[Sections~4 and 5]{KosyginaZerner12} and references therein in the case of ERWs. 
The right-transient ERWRE will be related to a specific \emph{branching process in a random environment with migration} (BPMRE for short).
Its velocity is then positive or non-positive according to whether the expected total size of the BPMRE until its first time of extinction is finite or infinite.
Therefore, this work also contains a result on BPMRE.
As a third tool --- especially used when we deal with infinite cookie stacks and in order to obtain positive speed --- we simply apply results about RWREs and exploit the monotonicity of $\nu$ with respect to the cookie environment, see \cite[Proposition 4.2]{KosyginaZerner12}.

The article is organized as follows. The next section is devoted to the study of a specific branching process in a random environment with emigration. It is slightly different to the BPMRE that corresponds to the ERWRE in the case of transience to the right, but is later used to prove that the expected total size of the latter branching process up to its first time of dying out is infinite. In Section~\ref{sec:translationERWRE_BPMRE} the correspondence between $(S_n)_{n\geq0}$ and a BPMRE is given.
Section~\ref{sec:proof_of_speed} finally contains the proofs of Theorems~\ref{th:1_transientRE} and \ref{th:2_transientRE}. 

\section{Branching process in a random environment with migration}
\label{sec:supercriticalBPMRE}

In this section we introduce a branching process in a random environment with emigration. It will be similar to the BPMRE that is related to the ERWRE in Section~\ref{sec:translationERWRE_BPMRE}.
The first process has the advantage of being easier to handle. For convenience and in view of its application to $(S_n)_{n\geq0}$, let us define the branching process on $\Omega'$.
Therefore, we assume without loss of generality that there is a family $\{\xi_{i}^{(n)}; i,n\in\N\}$ of independent random variables on $\Omega'$ such that, for $\mathbb{P}$-a.e.\ $(p,M)$,  
\[
 P_{(p,M)}[\xi_{i}^{(n)}=k]=(1-p_n)^k\cdot p_n \quad\text{for}\quad k\in\N_0,
\]
 i.e.\ for all $i\in\N$, $\xi_{i}^{(n)}$ is geometrically distributed with parameter $p_n$.
Let us define now $Z_0:=1$ and for $n\geq1$ recursively
  \[Z_n:=\left(\sum_{i=1}^{Z_{n-1}}\xi_{i}^{(n)}-M_n\right)_+.\] 

This process belongs to the class of branching processes in an i.i.d.\ random environment with migration.
In our setting there is no immigration and the number of emigrants is unbounded.
Furthermore, in the above definition, the number of emigrants is immediately subtracted from the population size.
Note that 0 is an absorbing state for $(Z_n)_{n\geq0}$.

Given an environment $(p,M)$, the expected number of offspring in generation $n$ per individual is 
\[
 E_{(p,M)}[\xi_{1}^{(n)}]=\frac{1-p_n}{p_n}=\rho_n
\]
and its variance is 
\[
 \Var_{(p,M)}[\xi_{1}^{(n)}]=\frac{1-p_n}{p_n^2}.
\]

The literature on branching processes in general is extensive, see for instance \cite{Athreya, Haccou_Jagers_Vatutin2005, Vatutin93}.
If there is no migration in any generation, i.e.\ $M_n=0$ for all $n\in\N$, then $(Z_n)_{n\geq0}$ belongs to the class of \emph{branching processes in random environments} (BPRE for short), see for instance \cite{Smith69, Athreya} or \cite{BirknerGeigerKersting05} and references therein.
The branching process combining the concept of reproduction according to a random environment with the phenomenon of migration --- and here especially unbounded emigration --- does not seem to be broadly discussed. To the best of our knowledge, the results given in Proposition \ref{prop:supercriticalBPMRE} are not yet covered by the literature. Therefore, we will prove Proposition \ref{prop:supercriticalBPMRE}, that is required for our study of ERWREs, directly in this section.

We use the usual classification of BPREs, see e.g.\ \cite{Athreya} or \cite{BirknerGeigerKersting05}, and call $(Z_n)_{n\geq0}$ \emph{subcritical, critical} or \emph{supercritical} according to whether $\mathbb{E}[\log\rho_1] <0, =0$ or $> 0$.
The BPRE dies out a.s.\ in the subcritical and critical regime, whereas --- under a certain integrability condition --- the supercritical BPRE may explode, see \cite{Smith69, Athreya}.
Note that the process $(Z_n)_{n\geq0}$ is heuristically similar to some random difference equation: $Z_n$ should be more or less $(\rho_n\cdot Z_{n-1}-M_n)_+$. This similarity helps to study the BPRE with emigration in the proof of the following proposition. For some heuristic to Proposition~\ref{prop:supercriticalBPMRE} we refer the reader to the Remarks~\ref{remark:heuristic} and \ref{remark:heuristic2} below.

\begin{proposition}
  \label{prop:supercriticalBPMRE}
  Let Assumption \ref{as_A_paper3} hold and assume $\mathbb{E}[\rho_1] > \mathbb{P}[M_1<\infty]^{-1}$.
  The total population size of $(Z_n)_{n\geq0}$ has infinite mean, i.e.\ $E_0[\sum_{j\geq 0} Z_j]=\infty$,
  if one of the following conditions holds.
  \begin{itemize}[font=\normalfont]
    \item[(i)] $(Z_n)_{n\geq0}$ is supercritical or critical $(\mathbb{E}[\log\rho_1]\geq 0)$.
    \item[(ii)] $(Z_n)_{n\geq0}$ is subcritical $(\mathbb{E}[\log\rho_1]<0)$ and
  $\gamma\mathbb{E}[\rho_1]\mathbb{P}[M_1<\infty]>1$, where $\gamma=\mathbb{E}[\rho_1^{\beta}]$ with $\beta$ such that $\mathbb{E}[\rho_1^{\beta}\log\rho_1]=0$.
  \end{itemize}
\end{proposition}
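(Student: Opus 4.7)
The plan is to dominate the quenched mean of the branching process by a deterministic (given the environment) recursion, and then bound that recursion via an exponential change of measure. Since $x \mapsto (x - M_n)_+$ is convex, Jensen's inequality, applied first over the offspring $\xi$-randomness conditional on $Z_{n-1}$ and then over $Z_{n-1}$ itself, yields $E_{(p,M)}[Z_n] \geq W_n$, where $W_0 := 1$ and $W_n := (\rho_n W_{n-1} - M_n)_+$. It therefore suffices to show $\sum_n \mathbb{E}[W_n] = \infty$.

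Setting $T := \inf\{n : W_n = 0\}$ and $U_n := \sum_{k=1}^n M_k \prod_{i=1}^k \rho_i^{-1}$, unrolling the recursion on $\{T > n\}$ gives $W_n = \prod_{i=1}^n \rho_i \cdot (1 - U_n)$, and since $U_n$ is nondecreasing, $\{T > n\} = \{U_n < 1\}$. In particular $W_n \geq \tfrac{1}{2} \prod_{i=1}^n \rho_i \cdot \mathbf{1}_{U_n < 1/2}$.

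Fix $K$ large. By independence of $(\rho_x)$ and $(M_x)$ (Assumption~\ref{as_A2_paper3}), restricting to the event $\{M_k \leq K \text{ for all } k \leq n\}$ yields
\[
\mathbb{E}[W_n] \geq \tfrac{1}{2}\, \mathbb{P}[M_1 \leq K]^n\, \mathbb{E}\bigl[\textstyle\prod_{i=1}^n \rho_i \cdot \mathbf{1}_{\tilde U_n < 1/2}\bigr],
\]
where $\tilde U_n$ uses iid copies $\tilde M_k$ of $M_1$ conditioned on $\{M_1 \leq K\}$, independent of $(\rho_i)$. The size-biased tilt $d\tilde{\mathbb{P}}/d\mathbb{P} := \prod_{i=1}^n \rho_i / \mathbb{E}[\rho_1]^n$ converts the last expectation into $\mathbb{E}[\rho_1]^n \cdot \tilde{\mathbb{P}}[\tilde U_n < 1/2]$. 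Strict convexity of $g(t) := \mathbb{E}[\rho_1^t]$ (guaranteed by Assumption~\ref{as_A4_paper3}) together with $g(1) \geq 1$ forces $\tilde\mu := \tilde{\mathbb{E}}[\log \rho_1] = g'(1)/g(1) > 0$ in both cases~(i) and~(ii): in (i), $g'(0) = \mathbb{E}[\log\rho_1] \geq 0$ and strict convexity give $g'(1) > 0$; in (ii), $g'(0) < 0$ combined with $g(1) \geq g(0) = 1$ forces $g$ to be minimized at some $\beta \in (0,1)$, so again $g'(1) > 0$. Under $\tilde{\mathbb{P}}$, the random walk $\tilde S_k := \sum_{i \leq k} \log \rho_i$ has strictly positive drift $\tilde\mu$, so $\tilde U_\infty \leq K \sum_k e^{-\tilde S_k} < \infty$ $\tilde{\mathbb{P}}$-a.s. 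Using $\mathbb{P}[M_0 = 0] > 0$ (Assumption~\ref{as_A5_paper3}), the decoupling of $\tilde M$ and $\rho$, and the SLLN, the event $\{\tilde M_1 = \cdots = \tilde M_N = 0\} \cap \{\tilde S_k \geq \tilde\mu k / 2 \text{ for all } k \geq N\}$ has positive $\tilde{\mathbb{P}}$-probability for every $N$, and for $N$ large it forces $\tilde U_\infty < 1/2$; hence $\tilde{\mathbb{P}}[\tilde U_\infty < 1/2] \geq c(K) > 0$.

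Putting everything together, $\mathbb{E}[W_n] \geq \tfrac{c(K)}{2}\,(\mathbb{E}[\rho_1]\,\mathbb{P}[M_1 \leq K])^n$. Since $\mathbb{E}[\rho_1]\,\mathbb{P}[M_1 < \infty] > 1$ by the standing hypothesis of the proposition, one can choose $K$ so large that $\mathbb{E}[\rho_1]\,\mathbb{P}[M_1 \leq K] > 1$, which forces $\sum_n \mathbb{E}[W_n] = \infty$ and proves both (i) and (ii). The main technical obstacles will be (a) writing the two-step Jensen comparison $E_{(p,M)}[Z_n] \geq W_n$ cleanly across the two layers of randomness, and (b) controlling $\tilde{\mathbb{P}}[\tilde U_\infty < 1/2]$ in the absence of any moment hypothesis on $M$; the truncation $M_k \leq K$ is the decisive device for (b), as it renders $\tilde U_\infty$ dominated by a geometrically summable series under any positive-drift log-tilt of $\rho$. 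The Cramér quantity $\gamma$ appearing only in case (ii) is not intrinsic to the bound above and most likely enters the author's own argument through a Cramér tilt $d\mathbb{P}^\beta/d\mathbb{P} = \rho_1^\beta/\gamma$, which converts the subcritical recursion into a critical one --- the standard route in the literature on subcritical branching in random environments.
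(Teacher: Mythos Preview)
Your argument is correct, and it is genuinely different from --- and in fact cleaner and stronger than --- the paper's own proof.

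The paper also passes through the deterministic recursion $X_0=1$, $X_n=(\rho_n X_{n-1}-M_n)_+$ (your $W_n$), but instead of your Jensen step it compares $Z_j$ to $X_j/j$ pathwise via Chebyshev's inequality on good environment events, then uses an FKG decoupling to separate the factor $\prod_i\rho_i$ from the event $\{Y_j\geq 0\ \forall j\leq m\}$. This leads to a lower bound of the form
\[
E_0\Bigl[\sum_j Z_j\Bigr]\;\gtrsim\;\frac{1}{m}\,\mathbb{E}[\rho_1]^m\,\mathbb{P}[U_1>\kappa]^{\tilde\kappa m}\,\mathbb{P}[M_1<\infty]^m\,\mathbb{P}\bigl[Y_j\geq 0\ \forall j\leq m\bigr],
\]
and the last factor is what forces the extra hypothesis $\gamma\,\mathbb{E}[\rho_1]\,\mathbb{P}[M_1<\infty]>1$ in the subcritical case, since $\mathbb{P}[Y_j\geq 0\ \forall j\leq m]\asymp m^{-3/2}\gamma^m$ there.

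Your two innovations avoid this loss. First, the two-step Jensen bound $E_{(p,M)}[Z_n]\geq W_n$ (convexity of $x\mapsto(x-M_n)_+$, then of $z\mapsto(\rho_n z-M_n)_+$, with monotonicity for the induction) replaces the Chebyshev coupling at no cost. Second, the size-biased tilt $d\tilde{\mathbb P}/d\mathbb P=\rho_1/\mathbb E[\rho_1]$ converts the expectation $\mathbb E[\prod\rho_i\,\mathbf 1_{\tilde U_n<1/2}]$ into $\mathbb E[\rho_1]^n\,\tilde{\mathbb P}[\tilde U_n<1/2]$, and since $g(1)=\mathbb E[\rho_1]>1=g(0)$ together with strict convexity (Assumption~\ref{as_A4_paper3}) forces $g'(1)>0$, the tilted walk $\tilde S_k$ has strictly positive drift in \emph{both} regimes (i) and (ii). Hence $\tilde{\mathbb P}[\tilde U_\infty<1/2]>0$ uniformly in $n$, and you obtain $\mathbb E[W_n]\geq c\,(\mathbb E[\rho_1]\,\mathbb P[M_1\leq K])^n$ with no $\gamma$ factor. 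Your argument therefore proves the conclusion under the standing hypothesis $\mathbb E[\rho_1]\,\mathbb P[M_1<\infty]>1$ alone, rendering the additional condition in~(ii) superfluous. (Assumption~\ref{as_A3_paper3} is still used: $\mathbb E[\rho_0^2]<\infty$ and $\mathbb E[(\log\rho_0)^2]<\infty$ together give, via Cauchy--Schwarz, the finiteness of $\tilde{\mathbb E}[\,|\log\rho_1|\,]$ needed for the SLLN under $\tilde{\mathbb P}$.)

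What the paper's route buys is a pathwise comparison $Z_j\geq X_j/j$ on explicit events, which could in principle be recycled for finer distributional information; your route trades that for a short first-moment computation that is sharper for the question actually asked.
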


\begin{remark}
  \label{remark:heuristic}
  Let us give a short heuristic for Proposition \ref{prop:supercriticalBPMRE} in the supercritical setting. Consider a sequence that grows exponentially until some ``catastrophe'' happens
  that causes extinction.
  Precisely, for some $a>1$ let $\tilde{X}_0:=1$ and recursively $\tilde{X}_n:=a \tilde{X}_{n-1}$ if $M_n<a \tilde{X}_{n-1}$ and $\tilde{X}_n:=0$ otherwise, for $n\in\N$.
  Now, calculations show that for every $m\in\N$
    \[
      \mathbb{E}\Bigg[\sum_{j\geq 0} \tilde{X}_j\Bigg]\geq \mathbb{E}[\tilde{X}_m|T_0^{\tilde{X}}>m]\cdot\mathbb{P}[T_0^{\tilde{X}}>m]= a^m\cdot \prod_{k=1}^{m}\mathbb{P}[M_1<a^k] 
    \]
  where $T_0^{\tilde{X}}:=\inf\{n\geq1: \tilde{X}_n=0\}$. Thus, the expected sum of $\tilde{X}_j$ is infinite if $a\cdot\mathbb{P}[M_1<\infty]>1$.
\end{remark}

\begin{remark}
  \label{remark:heuristic2}
  Note that Proposition \ref{prop:supercriticalBPMRE} covers supercritical, critical and some subcritical BPREs with emigration.
  The heuristics to the proposition for supercritical BPREs with emigration were given in Remark \ref{remark:heuristic}. That the result should also hold for critical BPREs with emigration and specific subcritical BPREs with emigration, is motivated by recent work on BPREs, see e.g.\ \cite{BirknerGeigerKersting05, AfanasyevEtAl05, AfanasyevEtAl12} and references therein. There, it was shown that critical and so-called weakly subcritical BPREs behave in a supercritical manner when conditioned on survival. Thus, one can hope that the prize to pay for survival is negligible compared to the growth of an supercritical BPRE with emigration. 
\end{remark}

\begin{proof}[Proof of Proposition \ref{prop:supercriticalBPMRE}]
  The key idea of the proof is to couple $(Z_n)_{n\geq0}$ and
  a process $(X_n)_{n\geq0}$ that is similar to a random difference equation. More precisely, let $X_0:=1$ and recursively
    \[
      X_n:=(\rho_n X_{n-1}-M_n)_+
    \]
  for $n\in\N$. Note the analogy to the idea in the proofs of Theorem 2.2 in \cite{Bauernschubert12} and Theorem 4 in \cite{Bauernschubert13}.
  If the sequence $(M_n)_{n\geq1}$ is neglected, the growth of $(X_n)_{n\geq0}$ is determined by its ``associated random walk''.
  The same random walk basically describes the behavior of the BPRE without migration, see for instance \cite{BirknerGeigerKersting05}.
  Therefore, let us define $U_i:=\log \rho_i$ for $i\in\N$ and $Y_i:=U_1 +\ldots + U_i$ for $i\in\N$.
  Since $\mathbb{E}[\rho_1]=\mathbb{E}[\exp{U_1}]>1$ by assumption, we can find for every $0<\delta<1$ some $\kappa>0$ and $0<\tilde{\kappa}\leq 1$ such that
    \begin{equation}
      \label{eq:U1>kappa}
      \mathbb{P}[U_1>\kappa]^{\tilde{\kappa}} >1-\delta.
    \end{equation}

  Set $\epsilon:=\kappa\cdot\tilde{\kappa}$ and for $m\in\N$
  \begin{equation}
    \label{def:A_m}
     A(m):=\{\text{for all } 1\leq j\leq m: Y_j\geq \epsilon\cdot j\}.
  \end{equation}

  We control the probability of $A(m)$ from below by
    \begin{align}
      \label{eq:Prob_Am}
      \mathbb{P}[A(m)]
        & \geq \mathbb{P}[\forall 1\leq i\leq \lceil\tilde{\kappa}m\rceil: U_i\geq\kappa,\; \forall \lceil\tilde{\kappa}m\rceil <  j\leq m: Y_j-Y_{\lceil\tilde{\kappa}m\rceil}\geq0]\notag\\
        & \geq \mathbb{P}[U_1>\kappa]^{\tilde{\kappa}m+1} \cdot \mathbb{P}[\forall 1\leq j\leq (1-\tilde{\kappa})m: Y_j\geq0]\notag\\
        & \geq \mathbb{P}[U_1>\kappa]^{\tilde{\kappa}m+1} \cdot \mathbb{P}[\forall 1\leq j\leq m: Y_j\geq0]. 
    \end{align}
    
  In order to control the emigration define for $m$ and $k\in\N$, with $m\geq k$
    \[
      B(m,k):= \{M_1=\ldots=M_{k}=0, \forall  k<n\leq m: M_n<n\}.
    \]
  The events $A(m)$ and $B(m,k)$ are independent under $\mathbb{P}$ and
    \begin{align}
      \label{eq:Prob_Bm}
      \mathbb{P}[B(m,k)]=\mathbb{P}[M_1=0]^k\cdot\prod_{j=k+1}^{m}\mathbb{P}[M_1<j].
    \end{align}

  On $A(m)\cap B(m,k)$ with $m\geq k$, it is obtained that for all $1\leq j\leq k$
    \[
      X_j=\exp(Y_j)\geq \exp(\epsilon j)
    \]
  and by induction for all $k< j\leq m$
    \begin{align*}
      X_j 
      \geq\exp(Y_j)\left(1-\sum_{n=k+1}^{j}n \exp(-Y_n)\right)
 \geq\exp(Y_j)\left(1-\sum_{n=k+1}^{\infty}n \exp(-\epsilon n)\right).
    \end{align*}
  Thus, there exists $0<c_1<1$ such that for sufficiently large $k$ and all $m\geq k$, on $A(m)\cap B(m,k)$,
    \begin{equation}
      \label{eq:X_on_AmBm}
      X_j\geq c_1 \exp(Y_j)\geq c_1 \exp(\epsilon j)>0\quad\text{ for all } 1\leq j \leq m.
    \end{equation}

  Fix $k\in\N$ such that (\ref{eq:X_on_AmBm}) holds.
  For every $m\geq k$
    \begin{align}
      \label{eq:sum_Z_j1}
      E_0\Bigg[\sum_{j\geq 0} Z_j\Bigg]
      & \geq E_0\Bigg[\sum_{j\geq 0} Z_j, A(m),B(m,k),\forall j\leq m :Z_j\geq\frac{X_j}{j} \Bigg] \notag\\
      & \geq E_0\left[\frac{1}{m}X_m, A(m),B(m,k),\forall j\leq m :Z_j\geq\frac{X_j}{j} \right]\notag\\
      & \geq \frac{c_1}{m}E_0\left[\exp(Y_m), A(m),B(m,k),\forall j\leq m :Z_j\geq\frac{X_j}{j} \right].
  \end{align}

  For the moment let us have a closer look at $P_{(p,M)}[\forall j\leq m: Z_j\geq\frac{ X_j}{j}]$ on $A(m)\cap B(m,k)$.
  We can write
    \begin{align}
      \label{eq:totale_Wkeit}
      P_{(p,M)}&\left[\forall j\leq m: Z_j\geq\frac{ X_j}{j}\right]\notag\\
      & = P_{(p,M)}[Z_1\geq X_1]\prod_{j=2}^{m}P_{(p,M)}\left[Z_j\geq\frac{ X_j}{j}\Big|Z_{j-1}\geq\frac{X_{j-1}}{j-1} ,\ldots, Z_1\geq X_1\right] 
    \end{align}
  and obtain, with \eqref{eq:X_on_AmBm}, on $A(m)\cap B(m,k)$ for $2\leq j\leq m$
    \begin{align}
      \label{eq:totale_Wkeit_2}
       P&_{(p,M)}\left[Z_j\geq\frac{X_j}{j}\Big|Z_{j-1}\geq\frac{ X_{j-1}}{j-1},\ldots, Z_1\geq X_1\right]\notag\\
      & = \sum_{n\geq \frac{c_1\exp(Y_{j-1})}{j-1}}P_{(p,M)}\left[\sum_{i=1}^{n}\xi_{i}^{(j)}-M_j\geq \frac{\rho_j X_{j-1}-M_j}{j}\Big| Z_{j-1}=n\geq\frac{X_{j-1}}{j-1} ,\ldots, Z_1\geq X_1\right]\notag\\
      & \hphantom{=\sum_{n\geq \frac{c_1\exp(Y_{j-1})}{j-1}}}
         \cdot P_{(p,M)}\left[Z_{j-1}=n \Big|Z_{j-1}\geq\frac{X_{j-1}}{j-1} ,\ldots, Z_1\geq X_1\right]\notag\\
      & \geq \sum_{n\geq \frac{c_1\exp(Y_{j-1})}{j-1}}P_{(p,M)}\left[\sum_{i=1}^{n}\xi_{i}^{(j)}\geq \frac{\rho_j (j-1)n}{j} +(1-\frac{1}{j})M_j\right]\notag\\
      & \hphantom{\geq \sum_{n\geq \frac{c_1\exp(Y_{j-1})}{j-1}}}
      \cdot P_{(p,M)}\left[Z_{j-1}=n \Big|Z_{j-1}\geq\frac{X_{j-1}}{j-1} ,\ldots, Z_1\geq X_1\right].
    \end{align}    
  Further calculations yield for $(p,M)$ satisfying $A(m)\cap B(m,k)$, for $n\geq \frac{c_1\exp(Y_{j-1})}{j-1}$ and $2\leq j\leq m$
    \begin{align}
        \label{eq:totale_Wkeit_3}
       P_{(p,M)}\left[\sum_{i=1}^{n}\xi_{i}^{(j)}\geq \frac{\rho_j (j-1)n}{j} +(1-\frac{1}{j})M_j\right]
      & \geq P_{(p,M)}\left[\sum_{i=1}^{n}\xi_{i}^{(j)}\geq \frac{\rho_j (j-1)n}{j} +j\right]\notag\\
      & = P_{(p,M)}\left[n\rho_j - \sum_{i=1}^{n}\xi_{i}^{(j)}\leq \frac{n\rho_j}{j}-j\right].
    \end{align}
  Note that here $n\rho_j\geq \frac{c_1\exp(Y_{j})}{j-1}\geq \frac{c_1\exp(\epsilon j)}{j-1}$ by \eqref{eq:X_on_AmBm}. 
  Choose $j_0\in\N$ such that $c_1\exp(\epsilon j)j^{-3}\geq 2$ for all $j\geq j_0$.
  As in \cite[p.\ 643]{Bauernschubert12} we apply now Chebyshevs inequality. For sufficiently large $m$, we get for all $j_0 <j\leq m$
    \begin{align}
        \label{eq:totale_Wkeit_4}
      & P_{(p,M)}\left[n\rho_j - \sum_{i=1}^{n}\xi_{i}^{(j)}\leq \frac{n\rho_j}{j} -j\right]
       \geq P_{(p,M)}\left[\Big|n\rho_j - \sum_{i=1}^{n}\xi_{i}^{(j)}\Big|\leq \frac{n\rho_j}{j}-j\right]\notag\\
      & \geq 1- \frac{n \Var_{(p,M)}(\xi_{1}^{(j)})}{(\frac{n\rho_j}{j}-j)^2}
       =  1- \frac{n \Var_{(p,M)}(\xi_{1}^{(j)}) j^2}{(1-\frac{j^2}{n\rho_j})^2(n\rho_j)^2}
       \geq 1 - 4 \frac{j^2}{n\rho_j p_j}.
    \end{align}
  On $A(m)$ we have for all $n\geq \frac{c_1}{j-1}\exp(Y_{j-1})$ on the one hand $n\rho_j p_j\geq \frac{c_1}{j-1}\exp(\epsilon j)p_j$ and on the other hand $n\rho_j p_j=n(1-p_j)\geq \frac{c_1}{j-1}\exp(\epsilon (j-1))(1-p_j)$. Thus, $n\rho_j p_j\geq \frac{1}{2}\cdot\frac{c_1}{j-1}\exp(\epsilon (j-1))$.
  
  This gives together with (\ref{eq:totale_Wkeit_2}),  (\ref{eq:totale_Wkeit_3}) and (\ref{eq:totale_Wkeit_4}) for all $j_0\leq j \leq m$
  \begin{align}
    \label{eq:totale_Wkeit_5}
     P_{(p,M)}\left[Z_j\geq\frac{ X_j}{j} \Big|Z_{j-1}\geq\frac{ X_{j-1}}{j-1},\ldots, Z_1\geq X_1\right]
     \geq 1- c_2 j^3 e^{-\epsilon j}
  \end{align}
  for some $c_2>0$. Hence, there is some $j_1\geq j_0$ and some constant $c_3>0$ such that a similar calculation as in (\ref{eq:totale_Wkeit}) yields together with \eqref{eq:totale_Wkeit_5} 
  for all large $m$
  \begin{align*}
    P_{(p,M)}\left[\forall j\leq m: Z_j\geq\frac{X_j}{j} \right]
    & \geq P_{(p,M)}\left[\forall j\leq j_1: Z_j\geq\frac{X_j}{j}\right] \prod_{i\geq j_1}\left(1-c_2 i^3 e^{-\epsilon i}\right)\notag\\
    & \geq c_3 P_{(p,M)}\left[\forall j\leq j_1: Z_j\geq\frac{\exp(Y_j)}{j} \right].
  \end{align*}

  The last inequality holds since $X_i\leq\exp(Y_i)$ for all $i\in\N$ by definition.
  Recall (\ref{eq:sum_Z_j1}), \eqref{def:A_m}, (\ref{eq:Prob_Am}) and the independence of $(p,M)$. We obtain for sufficiently large $m$ (such that $\lceil\tilde{\kappa}m\rceil\geq j_1$) and some constant $c_4>0$ that
  \begin{align}
  \label{eq:Abschaetzung3}
     E_0\Bigg[\sum_{j\geq 0} Z_j\Bigg]&
      \geq \frac{c_4}{m} \mathbb{E}\left[\exp(Y_m) P_{(p,M)}\left[\forall j\leq j_1: Z_j\geq\frac{\exp(Y_j)}{j} \right],A(m), B(m,k)\right]\notag\\
    & \geq \frac{c_4}{m} \mathbb{E}\left[\exp(Y_{j_1})P_{(p,M)}\left[\forall j\leq j_1: Z_j\geq\frac{\exp(Y_j)}{j}\right] ,\forall i\leq j_1: U_i\geq\kappa, B(m,k)\right]\notag\\
    & \quad\cdot \mathbb{E}\Bigg[\prod_{i=j_1+1}^{m}\rho_i,\forall  j_1<i\leq\lceil\tilde{\kappa}m\rceil: U_i\geq\kappa, \forall \lceil\tilde{\kappa}m\rceil<j\leq m: Y_j-Y_{\lceil\tilde{\kappa}m\rceil}\geq0\Bigg].
  \end{align}  
  Since $\exp(Y_{j_1})P_{(p,M)}\left[\forall j\leq j_1: Z_j\geq\exp(Y_j) j^{-1}\right]$ and $\{\forall i\leq j_1: U_i\geq\kappa\}$ only depend on $(p_i,M_i)_{1\leq i\leq j_1}$ we obtain, by independence of $(p,M)$, for some constant $c_5>0$ 
    \[
      \mathbb{E}\left[\exp(Y_{j_1})P_{(p,M)}\left[\forall j\leq j_1: Z_j\geq\frac{\exp(Y_j)}{j}\right] ,\forall i\leq j_1: U_i\geq\kappa, B(m,k)\right]
      = c_5 \mathbb{P}[B(m,k)]
    \]
  for all large $m$.
The FKG inequality, see for instance \cite[Theorem (2.4), p.\ 34]{Grimmett_Perc99}, gives
    \begin{align*}
      \mathbb{E}
       &  \Bigg[\prod_{i=j_1+1}^{m}\rho_i,\forall  j_1<i\leq\lceil\tilde{\kappa}m\rceil: U_i\geq\kappa, \forall \lceil\tilde{\kappa}m\rceil<j\leq m: Y_j-Y_{\lceil\tilde{\kappa}m\rceil}\geq0\Bigg]\\
       & \geq \mathbb{E}\Bigg[\prod_{i=j_1+1}^{m}\rho_i \Bigg] \mathbb{P}\left[\forall  j_1<i\leq\lceil\tilde{\kappa}m\rceil: U_i\geq\kappa, \forall \lceil\tilde{\kappa}m\rceil<j\leq m: Y_j-Y_{\lceil\tilde{\kappa}m\rceil}\geq0\right]
    \end{align*}
  This inequality can be applied here, since $(p_j)_{j\in\N}$ is a sequence of $[0,1]$-valued i.i.d.\ random variables, $\mathbb{E}[\rho_1^2]<\infty$, and $\prod_{i=j_1+1}^{m}\rho_i$ and 
  $\mathbf{1}_{\{\forall i\leq\lceil\tilde{\kappa}m\rceil: U_i\geq\kappa, \forall \lceil\tilde{\kappa}m\rceil<j\leq m: Y_j-Y_{\lceil\tilde{\kappa}m\rceil}\geq0\}}$ are both monotonically decreasing functions in $(p_j)_{j\in\N}$ with respect to the usual partial order on $[0,1]^{\N}$.
  
  Hence, together with \eqref{eq:Abschaetzung3}, we have for some constant $c_6>0$ that
  \begin{align}
  \label{eq:Abschaetzung4}
     E_0\Bigg[\sum_{j\geq 0} Z_j\Bigg]\geq  \frac{c_6}{m} \mathbb{P}[B(m,k)] \mathbb{E}[\rho_1]^m (\mathbb{P}[U_1>\kappa]^{\tilde{\kappa}})^m \mathbb{P}[\forall 1\leq j\leq m: Y_j\geq0].
  \end{align}

  Thus, $E_0[\sum_{j\geq 0} Z_j]$ is infinite if the right hand side of (\ref{eq:Abschaetzung4}) goes to infinity for $m\to\infty$. Recall from (\ref{eq:Prob_Bm}) that $\mathbb{P}[B(m,k)]=\mathbb{P}[M_1=0]^k\prod_{j=k+1}^{m}\mathbb{P}[M_1<j]$ and remark that $\mathbb{P}[M_1<j]\to\mathbb{P}[M_1<\infty]$ as $j\to\infty$.
  Furthermore it is known in the case $\mathbb{E}[\log\rho_1]\geq0$ that $\mathbb{P}[\forall 1\leq j\leq m: Y_j\geq0]$ eventually exceeds $\frac{1}{\sqrt{m}}$ up to some multiplicative constant, see for instance \cite[XII.7]{Feller22}. 
  Thus the proposition follows immediately for supercritical or critical BPREs with emigration 
  when we choose $\delta$ in (\ref{eq:U1>kappa}) so small that $(1-\delta)\mathbb{P}[M_1<\infty]\mathbb{E}[\rho_1]>1$.
  
  Let $(Z_n)_{n\geq0}$ be a subcritical BPRE with emigration and $\mathbb{E}[\rho_1]>1$. 
   Due to (\ref{eq:Abschaetzung4}) the behavior of $\mathbb{P}[\forall 1\leq j\leq m: Y_j\geq0]$, as $m$ goes to infinity, is of interest.
  If the distribution of $U_1$ is non-lattice $\mathbb{P}[\forall 1\leq j\leq m: Y_j\geq0]$ is of order $m^{-\frac{3}{2}}\gamma^m$. (Recall that $\gamma=\mathbb{E}[\exp(\beta U_1)]<1$ with $\beta$ such that $\mathbb{E}[U_1\exp(\beta U_1)]=0$.)
  For references see for instance \cite[Theorem II]{Doney89} or \cite[Theorem 1.1 and Corollary 1.2]{AfanasyevEtAl12}.
  Thus, $E_0[\sum_{j\geq 0} Z_j]=\infty$ if $\gamma\mathbb{E}[\rho_1]\mathbb{P}[M_1<\infty]>1$ and the proposition is proven.
  For the lattice case, some monotonicity argument can be used.
\end{proof}

\section{Connection between random walks and branching processes}
\label{sec:translationERWRE_BPMRE}

  We turn now to the correspondence between ERWREs and certain BPMREs. Recall from the introduction that an RWRE is perturbed by cookies of maximal strength and that our aim is to study the speed of this ERWRE. In the current section we suppose that, additionally to Assumption \ref{as_A_paper3},
  the drift induced by the cookies wins, i.e.\ that
  \begin{equation}
    \label{eq:as_righttransient}
    P_0\left[\lim_{n\to\infty}S_n=+\infty\right]=1.
  \end{equation}
  Criteria for transience to the right are given in Theorem \ref{th:Bauernschubert12} and in Theorem~1 of \cite{Bauernschubert13} in the case of a left-transient or recurrent underlying RWRE. If the RWRE is right-transient then monotonicity with respect to the environment implies (\ref{eq:as_righttransient}), see \cite[Lemma 15]{Zerner05} (the condition $\omega(x,i)\geq\frac{1}{2}$ for all $x\in\Z$ and $i\in\N$ in \cite{Zerner05} is not necessary for the proof of Lemma~15).
  Due to Theorem 4.1 in \cite{KosyginaZerner12} the speed of the ERWRE exists on $\{S_n\to\infty\}$. The question is if there is a phase transition between zero and positive speed.

  A well-known tool to study the speed of an one-dimensional ERW is the so-called regeneration or renewal structure, see \cite[Section~6]{Zerner08} or \cite[Section~4]{KosyginaZerner12} and references therein.
  According to Lemma~4.5 in \cite{KosyginaZerner12} there are $P_0$-a.s.\ infinitely many random times $j$ on the event $\{S_n\to\infty\}$ with $S_m<S_j$ for all $m<j$ and $S_k\geq S_j$ for all $k\geq j$. The increasing enumeration of these renewal times is denoted by $(\tau_k)_{k\in\N}$.
  By \cite[Lemma~4.5]{KosyginaZerner12} and \eqref{eq:as_righttransient},
  we have that $(S_n)_{0\leq n\leq \tau_1}$, $(S_n-S_{\tau_k})_{\tau_k\leq n\leq \tau_{k+1}}$, $k\geq1$, are independent under $P_0$, $(S_n-S_{\tau_k})_{\tau_k\leq n\leq \tau_{k+1}}$, $k\geq1$, have the same distribution under $P_0$ and $E_0[S_{\tau_2}-S_{\tau_1}]<\infty$. Theorem 4.6 in \cite{KosyginaZerner12} gives, $P_0$-a.s.,
  \[
    \nu=\lim_{n\to\infty}\frac{S_n}{n}=\frac{E_0[S_{\tau_2}-S_{\tau_1}]}{E_0[\tau_2-\tau_1]}.
  \]
  Thus,
  \begin{equation}
    \label{eq:speed_correspondence1}
    \nu=0 \quad \text{ iff }\quad E_0[\tau_2-\tau_1]=\infty.
  \end{equation}

  The key to study the distribution of $\tau_2-\tau_1$ relies on the discussion of a branching process with migration in random environment that corresponds to the ERWRE. Compare this method to the one used for RWRE in \cite{KestenKozlovSpitzer75} and for ERW in \cite[Section~2]{Basdevant08b}, \cite[Section~6]{Zerner08} and \cite[Section~2]{KosyginaMountford11}, see also \cite[Section~5]{KosyginaZerner12} and references therein. For details concerning the connection we refer the reader to the specific sections in \cite{Zerner08, KosyginaMountford11, KosyginaZerner12}.

  Let us consider the so-called \emph{backward branching process} of the ERWRE. Therefore, recall that $(S_n)_{n\geq0}$ is transient to the right by (\ref{eq:as_righttransient}) and thus $\tau_1<\tau_2<\infty$ $P_0$-a.s.
  As in \cite[Section 6]{Zerner08}, denote by
  \[
    D_k:=\#\{n\in\N:\tau_1<n<\tau_2, S_n=S_{\tau_2}-k \text{ and } S_{n+1}=S_{\tau_2}-k-1 \}, \quad k\in\N_0,
  \]
  the number of downcrossings from $S_{\tau_2}-k$ to $S_{\tau_2}-k-1$ between times $\tau_1$ and $\tau_2$. The number of upcrossings in this time interval is $S_{\tau_2}-S_{\tau_1}+\sum_{k\geq0}D_k$. Hence
  \begin{align}
    \label{eq:connection1}
    \tau_2-\tau_1=S_{\tau_2}-S_{\tau_1}+2\sum_{k\geq0}D_k,
  \end{align}

  and thus $E_0[\tau_2-\tau_1]=\infty$ if and only if $E_0[\sum_{k\geq0}D_k]=\infty$.

  It can be shown like in the proof of \cite[Lemma 12]{Zerner08} that $(D_k)_{k\geq0}$ is distributed, under $P_0$, like a BPMRE $(W_k)_{k\geq0}$ defined by $W_0=0$ and 
  \[
    W_k=\mathbf{1}_{\{k\leq T_0^{W}\}} \sum_{i=1}^{W_{k-1}+1-M_k}\xi_{i}^{(k)},
  \]
  where $\xi_i^{(j)}$, $i,j\in\N_0$, are random variables on $\Omega'$ that are independent under $P_{(p,M)}$, and $P_{(p,M)}[\xi_i^{(j)}=n]=(1-p_j)^n p_j$ for $n\in\N_0$. The random variable $T_0^{W}:=\inf\{k\geq1: W_k=0\}$ denotes the first time of extinction of $(W_k)_{k\geq0}$.
  
  The correspondence now yields by \eqref{eq:connection1}
  \begin{equation*}
    E_0[\tau_2-\tau_1]=\infty \quad \text{ iff }\quad  E_0\Bigg[\sum_{k\geq0}W_k\Bigg]=E_0\Bigg[\sum_{k=1}^{T_0^{W}-1}W_k\Bigg]=\infty
  \end{equation*}
  and therefore by (\ref{eq:speed_correspondence1})
  \begin{equation}
    \label{eq:speed_correspondence2}
    \nu=0 \quad \text{ iff } \quad E_0\Bigg[\sum_{k=1}^{T_0^{W}-1}W_k\Bigg]=\infty.
  \end{equation}
\label{sec:translationERWRE_BPMRE_end}  
  
\section{On the speed of the random walk, proofs}
\label{sec:proof_of_speed}

  At first we show that $(S_n)_{n\geq0}$ satisfies a strong law of large numbers.
  
  \begin{theorem}
    \label{th:speed_exists}
    Let Assumption \ref{as_A_paper3} hold. Then
    there exists a non-random $\nu\in[-1,1]$ such that $\lim_{n\to\infty}S_n/n=\nu$ $P_0$-a.s. 
  \end{theorem}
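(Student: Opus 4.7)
The plan is to invoke the general strong law of large numbers for one-dimensional ERWs with iid cookie stacks, namely \cite[Theorem~4.1]{KosyginaZerner12}. The framework there requires only that the cookie stacks $(\omega(x,\cdot))_{x\in\Z}$ form an iid sequence under $\mathbb{P}$ and satisfy a mild integrability, both of which are guaranteed by Assumption \ref{as_A_paper3}: each stack consists of $M_x$ entries equal to $1$ followed by the deterministic tail $p_x$, and $(p_x,M_x)_{x\in\Z}$ is iid by \ref{as_A2_paper3}, while the moment condition \ref{as_A3_paper3} supplies the required integrability. Hence $(S_n)_{n\geq 0}$ is an excited random walk in an iid cookie environment in the sense of \cite{KosyginaZerner12}, and the SLLN with deterministic $\nu\in[-1,1]$ follows.

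For transparency I would outline the underlying argument. First, via ergodicity of $(p_x,M_x)_{x\in\Z}$ under the integer shift, the zero-one law $P_0[\lim_{n\to\infty}S_n=+\infty]\in\{0,1\}$ (and likewise for $-\infty$) holds, yielding the trichotomy $P_0$-a.s.: either $S_n\to+\infty$, or $S_n\to-\infty$, or $S_n=0$ infinitely often.

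On the right-transient event I would use the regeneration structure of Section~\ref{sec:translationERWRE_BPMRE}: the blocks $(\tau_{k+1}-\tau_k,\,S_{\tau_{k+1}}-S_{\tau_k})_{k\geq 1}$ are iid under $P_0$ with $E_0[S_{\tau_2}-S_{\tau_1}]<\infty$, and \cite[Theorem~4.6]{KosyginaZerner12} delivers $S_n/n\to E_0[S_{\tau_2}-S_{\tau_1}]/E_0[\tau_2-\tau_1]\in[0,1]$, with the ratio read as $0$ when the denominator is infinite. The left-transient event is handled symmetrically via the mirror construction of left-regeneration times at successive running minima that are never exceeded again, producing $\nu\in[-1,0]$ in the same way.

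The main obstacle is the recurrent event, where no global regeneration structure is available. There I would argue $\nu=0$ by showing that $\max_{k\leq n}|S_k|$ grows sublinearly on recurrence: the excursion estimates already used in \cite{Bauernschubert13} to derive the recurrence criterion control the height of each excursion away from $0$, forcing $|S_n|$ to be of lower order than $n$. Since $|S_n|\leq\max_{k\leq n}|S_k|$, this yields $S_n/n\to 0$, and the deterministic nature of this limit is automatic.
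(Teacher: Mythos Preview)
Your proposal identifies the correct target---\cite[Theorem~4.1]{KosyginaZerner12}---but glosses over precisely the point the paper's proof addresses. That theorem does not apply on i.i.d.\ cookie stacks and integrability alone; one also needs the recurrence/transience dichotomy to hold $P_0$-a.s., i.e.\ $P_0[S_n\to+\infty],\,P_0[S_n\to-\infty]\in\{0,1\}$. Your claim that this zero-one law follows ``via ergodicity'' is not justified: for ERW the event $\{S_n\to+\infty\}$ is not a shift-invariant function of the environment alone, and the standard route to the zero-one law (Theorem~3.2 in \cite{KosyginaZerner12}) uses an ellipticity condition. Cookies of strength~$1$ lie exactly at the boundary of the usual framework, so this requires care. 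The paper's proof does the work you skip: it observes that Assumption~\ref{as_A5_paper3} ($\mathbb{P}[M_0=0]>0$) supplies the \emph{weak} ellipticity needed for case~(d) of Theorem~3.2, combines this with \cite[Lemma~2.2]{KosyginaZerner12} and with \cite[Proposition~4.1]{Bauernschubert12} plus monotonicity (to obtain $P_0[\sup_n S_n=\infty]=1$ in the cases not already handled by Theorem~\ref{th:Bauernschubert12}(i)), and only then invokes \cite[Theorem~4.1]{KosyginaZerner12}.

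Your treatment of the recurrent case is also off. Once the zero-one law is in place, \cite[Theorem~4.1]{KosyginaZerner12} already covers recurrence and yields $\nu=0$; no separate excursion-height bound is needed, and \cite{Bauernschubert13} does not provide such an estimate in the form you suggest. In short, the high-level plan coincides with the paper's, but the substantive content of the proof---verifying the hypotheses that make \cite[Theorem~4.1]{KosyginaZerner12} applicable when cookies have strength~$1$---is missing from your proposal.
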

  
  \begin{proof}
    If $\mathbb{E}[(\log M_0)_+]<\infty$ and $\mathbb{E}[\log\rho_0]>0$, the ERWRE goes to $-\infty$ $P_0$-a.s.\ by Theorem \ref{th:Bauernschubert12}(i). Then, $(S_n)_{n\geq0}$ satisfies a strong law of large numbers according to \cite[Theorem 4.1]{KosyginaZerner12}.
    
    If $\mathbb{E}[(\log M_0)_+]=\infty$ and $\mathbb{E}[\log\rho_0]> 0$,
    \cite[Proposition 4.1]{Bauernschubert12} and monotonicity with respect to the environment --- see \cite[Lemma 15]{Zerner05} which also holds for $\Omega=([0,1]^{\N})^{\Z}$ --- imply
    \[
     P_0\left[\sup_{n\geq0} S_n=\infty\right]=1.
    \]
  The same holds if the underlying RWRE is recurrent or right-transient, i.e.\ $\mathbb{E}[\log\rho_0] \leq0$.
  Since $\mathbb{P}[M_0=0]>0$ a weak ellipticity condition as described in \cite[p.\ 108]{KosyginaZerner12} holds for the environment $\omega$. Theorem 3.2 in \cite{KosyginaZerner12} --- weak ellipticity is sufficient for case (d) in the proof --- yields $P_0[|S_n|\to\infty]\in\{0,1\}$.
  Since $P_0[\liminf_{n\to\infty} S_n \in\{\pm \infty\}]=1$ by \cite[Lemma 2.2]{KosyginaZerner12} we get
    \[
      P_0\left[\inf_{n\geq0} S_n = -\infty\right]\in\{0,1\}.
    \]
  Thus, $(S_n)_{n\geq0}$ satisfies a strong law of large numbers according to \cite[Theorem 4.1]{KosyginaZerner12}.
  \end{proof}

  We now show Theorems \ref{th:1_transientRE} and \ref{th:2_transientRE}. Since their proofs are overlapping concerning the applied tools, we will merge them and structure it along the different methods.
  
  Let us first introduce some more notation. Recall the notation $P_0$, $E_0$, $P_{(p,M)}$ and $E_{(p,M)}$ as defined in the introduction.
  The quenched measure in an environment without cookies will be denoted by $P_{p}:=P_{(p_x,0)_{x\in\Z}}$ and the corresponding annealed measure by $P_{RE}[\cdot]:=\mathbb{E}[P_{p}[\cdot]]$. Note that under this measure $(S_n)_{n\geq0}$ is known as RWRE with start in $0$. Furthermore, situations will be considered, where there are only cookies on sites less or equal to zero. For this setting we write $P_{p,\leq 0}:=P_{(p_x,M_x)_{x\in-\N_0},(p_x,0)_{x\in\N}}$ and $P_{RE,\leq 0}[\cdot]:=\mathbb{E}[P_{p,\leq 0}[\cdot]]$. The corresponding expectations are $E_{p}$, $E_{RE}$, $E_{p,\leq 0}$ and $E_{RE,\leq 0}$ respectively. The speed or limit in the law of large numbers under $P_{RE}$, if it exists, is denoted by $\nu_{RE}$.
  For $k\in\Z$ let $T_k:=\inf\{n\geq0: S_n=k\}$ be the first hitting time of $k$.

\begin{proof}[Proof of Theorems \ref{th:1_transientRE} and \ref{th:2_transientRE}]
\hspace{\fill}

\textit{First part:} We shall prove Theorems \ref{th:1_transientRE}(iii) and \ref{th:2_transientRE}(ii)-(iii) by using the results on RWRE (Theorem \ref{th:RWRE_speed}) and monotonicity of the speed with respect to the environment, \cite[Proposition 4.2]{KosyginaZerner12}.
  
  First, let $M_0$ be $\{0,\infty\}$-valued, $\mathbb{P}$-a.s. Thus, we consider a.s.\ environments where the random walker encounters
  infinite cookie stacks and between those stacks an environment known from RWRE. The cookie piles can be regarded as ``one-way doors'': the random walker goes through from the left to the right but has no chance to get back.  
  Note that in this setting the model can be interpreted in terms of an RWRE in the following way.
  Define $\tilde{p}_x:=1$ if $M_x=\infty$ and $\tilde{p}_x:=p_x$ otherwise for $x\in\Z$.
  Then for $\mathbb{P}$-a.e.\ environment $(p,M)$, $(S_n)_{n\geq0}$ has the same distribution under $P_{(p,M)}$ and $P_{\tilde{p}}$ where $\tilde{p}:=(\tilde{p}_x)_{x\in\Z}$.
  Note that, under Assumption \ref{as_A_paper3}, $(\tilde{p}_x)_{x\in\Z}$ is i.i.d.\ under $\mathbb{P}$, $\tilde{p}_x\in (0,1]$, $\mathbb{P}[\tilde{p}_0=1]=\mathbb{P}[M_0=\infty]$ and $\mathbb{E}[\log \tilde{\rho}_0]$ is well defined (with possible value $-\infty$) where $\tilde{\rho}_x:=\frac{1-\tilde{p}_x}{\tilde{p}_x}$.
  
  According to Theorem \ref{th:RWRE_speed}, we obtain for the speed $\nu>0$ if $\mathbb{E}[\tilde{\rho}_0]<1$ and
  $\nu= 0$ if $\mathbb{E}[\tilde{\rho}_0]\geq 1$.
  Now, the statements in Theorems \ref{th:1_transientRE}(iii), \ref{th:2_transientRE}(iii) and \ref{th:2_transientRE}(ii)
  follow for the case where $M_0$ is $\{0,\infty\}$-valued, since 
  \[
   \mathbb{E}[\tilde{\rho}_0]=\mathbb{E}\left[\frac{1-\tilde{p}_0}{\tilde{p_0}}, \tilde{p}_0<1\right]=\mathbb{E}[\rho_0] \mathbb{P}[M_0<\infty].
  \]
  
  Consider the general case where $M_0$ is $\N_0\cup\{\infty\}$-valued.
  If $\mathbb{E}[\rho_0]\mathbb{P}[M_0=0]\geq 1$ then we replace finite (but not empty) cookie piles by infinite ones. Applying the monotonicity of the speed yields $\nu= 0$ and thus Theorem \ref{th:2_transientRE}(ii) follows.
  If $\mathbb{E}[\rho_0]\mathbb{P}[M_0<\infty]<1$ we do the reverse: Let all finite cookie stacks vanish and obtain thus an environment with infinitely many cookies or none per integer. Exploiting again monotonicity gives $\nu>0$ 
  and hence statement (iii) in Theorems \ref{th:1_transientRE} and \ref{th:2_transientRE}.
  
\textit{Second part:} We shall prove Theorem \ref{th:1_transientRE}(i)-(ii) under the additional assumptions $\mathbb{E}[\log\rho_0]>0$ and $\mathbb{E}[(\log M_0)_+]<\infty$, i.e.\ the underlying RWRE goes to $-\infty$ $P_0$-a.s.\ by Theorem \ref{th:Bauernschubert12}(i). For this setting we use the formula for the speed of an ERW in \cite[Proposition 13]{Zerner08}, see also \cite[Theorem 13]{Zerner05}. 
 
  Let $\mathbb{E}[\log\rho_0]>0$, $\mathbb{E}[(\log M_0)_+]<\infty$ and $\mathbb{E}[M_0]=\infty$. 
  According to \cite[Theorem 4.1]{KosyginaZerner12} and \cite[Proposition 13]{Zerner08} and their proofs, $(S_n)_{n\geq0}$ satisfies a law of large numbers 
  with 
  \[
   \frac{1}{\nu}=-\sum_{j\geq1}P_0[T_{-j-1}-T_{-j}\geq j]\leq -\sum_{j\geq1}P_0[M_{-j}\geq j]=-\mathbb{E}[M_0]=-\infty.
  \]
  
  Hence the speed $\nu$ is zero.
  
  Let $\mathbb{E}[M_0]<\infty$ and $\mathbb{E}[\log\rho_0]>0$. If $\mathbb{E}[\rho_0^{-1}]\geq1$ then $\nu_{RE}=0$ by Theorem \ref{th:RWRE_speed}.
  Using the monotonicity of $\nu$ with respect to the environment, see \cite[Proposition 4.2]{KosyginaZerner12}, and the fact that
  $S_n\to -\infty$ $P_0$-a.s.\ we obtain
    \[
     0=\nu_{RE}\leq \nu\leq0.
    \]
  If $\mathbb{E}[\rho_0^{-1}]<1$ then $E_{RE}[T_{-1}]<\infty$, compare to \cite[Theorem (1.15)]{Solomon75} or see
  \cite[Lemma 2.1.12, Theorem 2.1.9 + Remark]{Zeitouni}. Furthermore
  \begin{align*}
    &\sum_{j\geq1}P_0[T_{-j-1}-T_{-j}\geq j]=\sum_{j\geq1}P_{RE,\leq0}[T_{-j-1}-T_{-j}\geq j]\\
    &=\sum_{j\geq1}P_{RE,\leq0}[T_{-1}\geq j]=E_{RE,\leq0}[T_{-1}].
  \end{align*}
  
  For the first equality note that $P_{(p,M)}[T_{-j-1}-T_{-j}\geq j]$ is a function of $(p_x)_{-j\leq x\leq0}$ and $M_{-j}$ since $(S_n)_{n\geq0}$ is a nearest neighbor random walk and by time 
  $T_{-j}$ --- which is finite $P_0$-a.s.\ --- all cookies on integers $-j+1\leq x\leq0$ are eaten by the walker.
  The second equality holds by the strong Markov property and shift-invariance under $\mathbb{P}$.
  Now, it is obtained by independence of $M_0$ and $p$ that
  \begin{align*}
    E_{RE,\leq0}[T_{-1}]
    & =\mathbb{E}[M_0 (1+E_{1,p}[T_{0}])+E_{p}[T_{-1}]]\\
    & =\mathbb{E}[M_0](1+ E_{RE}[T_{-1}])+E_{RE}[T_{-1}]<\infty,
  \end{align*}
  where $E_{1,p}[T_{0}]$ is the expected time of a random walker in environment $p$ and start in $1$ to reach $0$. 
  Thus Theorem \ref{th:1_transientRE}(i) is proven completely and Theorem \ref{th:1_transientRE}(ii) under the additional assumption $\mathbb{E}[\log\rho_0]>0$.
  
  \textit{Third part:} Finally we prove Theorem \ref{th:1_transientRE}(ii) under the additional assumption that $\mathbb{E}[(\log M_0)_+]=\infty$ if $\mathbb{E}[\log\rho_0]> 0$, and Theorem \ref{th:2_transientRE}(i).
  As can be seen from the proof of Theorem \ref{th:speed_exists}, $(S_n)_{n\geq0}$ is either $P_0$-a.s.\ recurrent --- and then Theorem \ref{th:speed_exists} yields $\nu=0$ --- or $P_0$-a.s.\ transient to the right. 
  It remains to consider the latter case for which we will
  use the link between ERWREs and BPMREs from Section \ref{sec:translationERWRE_BPMRE}.
  
  Let $S_n\to+\infty$ $P_0$-a.s.\ and recall the notation from Section \ref{sec:translationERWRE_BPMRE}.
  We will show that 
    \begin{equation}
    \label{eq:sum_W_unendlich}
      E_0\Bigg[\sum_{k=1}^{T_0^{W}-1}W_k\Bigg]=\infty.
    \end{equation}
  In order to obtain this result, $(W_n)_{n\geq0}$ is compared to the slightly different BPRE with emigration, that was introduced in Section \ref{sec:supercriticalBPMRE}, where no immigration occurs 
  and the number of emigrants is immediately subtracted from the population size. The single immigrant that appeared in the link between the ERWRE and the BPMRE in Section \ref{sec:translationERWRE_BPMRE} is neglected.
  For comparing the two BPMRE models, the sequence $(M_k)_{k\in\N}$ in the process in Section~\ref{sec:supercriticalBPMRE} has to be shifted. So let here $Z_0:=1$ and
    \[
      Z_k:=\left(\sum_{i=1}^{Z_{k-1}}\xi_{i}^{(k)}-M_{k+1}\right)_+, \quad k\geq1.
    \]
  Assume that $M_1=0$. Then, induction shows that
    \[
      (W_k-M_{k+1}+1)_+\geq Z_k \quad\text{ for all }0\leq k\leq T_0^W,  
    \]
  and hence for $1\leq k\leq T_0^W$
    \[
      Z_k=\left(\sum_{i=1}^{Z_{k-1}}\xi_{i}^{(k)}-M_{k+1}\right)_+\leq\sum_{i=1}^{Z_{k-1}}\xi_{i}^{(k)}\leq \sum_{i=1}^{(W_{k-1}-M_{k}+1)_+}\xi_{i}^{(k)}=W_k.
    \]
  In particular $T_0^Z:=\inf\{n\geq1: Z_n=0\}\leq T_0^W$. Thus,
    \[
      \sum_{k=1}^{T_0^{W}-1}W_k\geq \mathbf{1}_{\{M_1=0\}}\sum_{k=1}^{T_0^{Z}-1}Z_k=\mathbf{1}_{\{M_1=0\}}\sum_{k\geq1}Z_k.
    \]

  Note that $M_1$ and $(Z_k)_{k\geq0}$ are independent and $\mathbb{P}[M_1=0]>0$. 
  Since it is assumed in Theorem \ref{th:1_transientRE}(ii) that $\mathbb{E}[\rho_1]> \mathbb{P}[M_1<\infty]^{-1}$, and respectively in Theorem \ref{th:2_transientRE}(i) that $\mathbb{E}[\rho_1]\gamma> \mathbb{P}[M_1<\infty]^{-1}$, Proposition \ref{prop:supercriticalBPMRE} yields (\ref{eq:sum_W_unendlich}). So, the speed of the ERWRE is zero by (\ref{eq:speed_correspondence2}). 
\end{proof}

\begin{remark}
  Under Assumption \ref{as_A_paper3},
  the speed of the ERWRE is zero by Theorem \ref{th:2_transientRE}(ii) if $M_0\in\{0,\infty\}$ $\mathbb{P}$-a.s.\ and $\mathbb{E}[\rho_0]\geq\mathbb{P}[M_0<\infty]^{-1}$.
  The same result can be obtained if $M_0\in\{0,1,\infty\}$ $\mathbb{P}$-a.s.\ instead of $M_0\in\{0,\infty\}$.
  Let us give shortly the main ideas for a proof in this specific case. A regeneration structure similar to the one in Section \ref{sec:translationERWRE_BPMRE} can be established where the renewals are the first hitting times of the infinite cookie stacks. In order to obtain that the expected sojourn time of the random walker between two infinite cookie piles is infinite, decomposition techniques and calculations similar to those for RWREs in \cite[proof of Lemma~2.1.12]{Zeitouni} can be used.
  \end{remark}

\vspace{0.4cm}
\subsection*{Acknowledgements}
This work was funded by the ERC Starting Grant 208417-NCIRW and is included in my dissertation at Eberhard Karls Universit\"at T\"ubingen.
I am grateful to my advisor Prof.\ Martin P.\ W.\ Zerner for his continuous support and helpful suggestions.
I thank Elmar Teufl for many fruitful discussions.

\bibliographystyle{amsplain}
\providecommand{\bysame}{\leavevmode\hbox to3em{\hrulefill}\thinspace}
\providecommand{\MR}{\relax\ifhmode\unskip\space\fi MR }

\parindent=0pt 
\end{document}